\documentclass[11pt]{amsart}

\usepackage{lmodern}
\usepackage{eucal}

\topmargin  -2mm
\evensidemargin 3mm
\oddsidemargin  3mm
\textwidth  162mm
\textheight 218mm
\parskip 4pt
\parindent=0pt
\hfuzz=2pt

\usepackage{bm,bbm,amsfonts,amsmath,amssymb,mathrsfs,tikz,hyperref,dsfont,csquotes,enumerate} 
\hypersetup{pdfborder={0000}, colorlinks=true, linkcolor=blue,citecolor=citegreen}
\definecolor{citegreen}{rgb}{0.2,0.2,0.6}

\usepackage[babel=true,kerning=true]{microtype}
\definecolor{darkred2}{rgb}{0.8,0.1,0.1}
\definecolor{darkred}{rgb}{0.8,0.1,0.1}
\usepackage{accents}

\newcommand\fri{\mathfrak{i}}

\def\arr{\rightarrow}

\def\aa{\alpha}
\def\lm{\lambda}


\def\s{\sigma}
\def\sd{\sigma_{\rm d}}
\def\sess{\sigma_{\rm ess}}

\def\ii{{\mathsf{i}}}

\def\kp{\kappa}

\def\sfH{\mathsf{H}}

\def\sfP{\mathsf{P}}
\def\dd{{\mathsf{d}}}

\def\sfS{\mathsf{S}}

\def\sfP{\mathsf{P}}

\newcounter{counter_a}
\newenvironment{myenum}{\begin{list}{{\rm(\roman{counter_a})}}%
{\usecounter{counter_a}
\setlength{\itemsep}{1.ex}\setlength{\topsep}{0.8ex}
\setlength{\leftmargin}{5ex}\setlength{\labelwidth}{5ex}}}{\end{list}}

\usepackage[latin1]{inputenc}
\usepackage[T1]{fontenc}

\newcommand{\eg}{{\it e.g.}\,}
\newcommand{\ie}{{\it i.e.}\,}
\newcommand{\cf}{{\it cf.}\,}

\numberwithin{figure}{section}
\numberwithin{equation}{section}
\theoremstyle{plain}
\newtheorem*{thm*}{Theorem}
\newtheorem{thm}{Theorem}[section]
\newtheorem{hyp}{Hypothesis}[section]
\newtheorem{lem}[thm]{Lemma}
\newtheorem{prop}[thm]{Proposition}

\newtheorem{dfn}[thm]{Definition}
\theoremstyle{remark}
\newtheorem{remark}[thm]{Remark}
\theoremstyle{plain}


%


\newcommand{\beu}{\begin{equation*}}
\newcommand{\eeu}{\end{equation*}}
\newcommand{\besu}{\begin{equation*}
\begin{aligned}}
\newcommand{\eesu}{\end{aligned}
\end{equation*}}
\newcommand{\bes}{\begin{equation}
\begin{aligned}}
\newcommand{\ees}{\end{aligned}
\end{equation}}

\newcommand\cM{\mathcal M}

\newcommand\fra{\mathfrak a}

\newcommand\ov{\overline}

\newcommand\void[1]{}

\def\ov{\overline}

\def\ran{{\rm ran\,}}


      \def\dC{{\mathbb C}}

      \def\dR{{\mathbb R}}
\def\dS{{\mathbb S}}      
      
   \def\dZ{{\mathbb Z}}

      \def\cC{{\mathcal C}}
      
      \def\cI{{\mathcal I}}
      
\def\cM{{\mathcal M}}      
      
   \def\cT{{\mathcal T}}

\newcommand{\dom}{\mathrm{dom}\,}

\newcommand\efR{\psi_{\Sigma}}

\def\G{\Gamma}

\title[A spectral isoperimetric inequality for cones]{A spectral isoperimetric inequality for cones}

\author{Pavel Exner}
\address{Department of Theoretical Physics, Nuclear Physics Institute, Czech Academy of Sciences, 25068 \v Re\v z near Prague, Czechia, and Doppler Institute for Mathematical Physics and Applied Mathematics, Czech Technical University, B\v rehov\'a 7, 11519 Prague, Czechia}
\email{exner@ujf.cas.cz} 

\author{Vladimir Lotoreichik}
\address{Department of Theoretical Physics,
Nuclear Physics Institute, Czech Academy of Sciences, 250 68, 
\v{R}e\v{z} near Prague, Czechia}
\email{lotoreichik@ujf.cas.cz}

\keywords{}

\begin{document}

\begin{abstract}
	In this note we investigate three-dimensional Schr\"odinger operators
	with $\delta$-interactions supported on $C^2$-smooth cones,
	both finite and infinite. Our main results concern a Faber-Krahn-type
	inequality for the principal eigenvalue of these operators.
	The proofs rely	on the Birman-Schwinger principle and on
	the fact that circles are unique minimizers for a class
	of energy functionals. The main novel idea consists
	in the way of constructing test functions for the Birman-Schwinger
	principle.
\end{abstract}

\subjclass[2010]{Primary 35P15; Secondary 35J10, 35Q40, 46F10, 81Q10, 81Q37}

\keywords{Schr\"odinger operator, $\delta$-interaction, conical surface, isoperimetric inequality, existence of bound states}

\maketitle

\section{Introduction and results}
Relations between geometric properties of the domains and
spectral properties of partial differential operators acting on them
belong to the trademark topics in mathematical physics. 
Spectral isoperimetric inequalities are one of the most famous examples 
of such relations, the first rigorous results dating almost a 
century back to the papers of Faber~\cite{Fa23} and Krahn~\cite{Kr25}. 
Recently spectral isoperimetric inequalities appeared in the context 
of Schr\"odinger operators with singular potentials
used as models of `leaky quantum wires' and similar systems~\cite{E08}, \cite[Chap.~10]{EK15}. In 
particular, for the two-dimensional Schr\"odinger operator with a 
$\delta$-type potential of a fixed strength supported on a loop of a given 
length it was shown that its principal eigenvalue is maximal when the loop 
is a circle, the respective isoperimetric inequality being strict~\cite{EHL06}. The corresponding problem in three dimensions is 
more involved. For closed simply connected surfaces of a fixed area the sphere gives a local maximum of the ground-state eigenvalue, however, the result does not have a global validity~\cite{EF09}.

Nevertheless, there are three-dimensional Schr\"odinger operators with
singular interactions supported on surfaces for which one is able to derive a
spectral isoperimetric inequality that holds not only locally. The aim of the 
present paper is to analyse one such class. The surfaces in question are
of a conical shape, both finite and infinite. The operators of study
are fully described through the strength $\aa > 0$ of the $\delta$-interaction, 
the radius of the cone $R \in (0,+\infty]$,
and its cross-section, whose length $L\in(0,2\pi]$
is important for our considerations.

For finite cones ($R < \infty$) we first verify that $\delta$-interactions
supported on finite circular cones (\ie having a rotational
symmetry) induce at least one negative bound
state if, and only if, the strength of the interaction
satisfies $\aa > \aa_{\rm cr}$ with certain
$\aa_{\rm cr} = \aa_{\rm cr}(L, R) > 0$.
Furthermore, we show that $\delta$-interactions supported
on finite non-circular cones with the same length of the cross-section 
and the same radius induce at least one negative bound state for any strength 
$\aa \ge \aa_{\rm cr}$. It should be stressed and it is non-trivial to 
show that for non-circular cones at least one negative bound state exists also in the borderline case
$\aa = \aa_{\rm cr}$. As the main result for finite cones
we prove that for any fixed set of parameters:
$L\in (0,2\pi]$, $R> 0$, and $\aa > \aa_{\rm cr}$,
the principal eigenvalue is maximized by circular cones
supporting the interaction. Moreover, the respective
spectral isoperimetric inequality is strict.

For infinite cones ($R = +\infty$) 
we verify that the discrete spectrum below the threshold
of the essential spectrum is always non-empty for any $L \in (0,2\pi)$
and $\aa > 0$. As the main result we prove that for fixed
$L\in(0,2\pi)$ and $\aa > 0$ 
the principal eigenvalue is maximized by infinite circular cones.

Spectral analysis of Schr\"odinger operators with $\delta$-interactions on 
conical surfaces and of closely related Robin Laplacians on conical domains
has attracted considerable attention in the recent time 
\cite{BEL14_JPA, BP15, ER15, LP08, LO15, LR15, P15}.
Finally, we mention that spectral isoperimetric inequalities were 
previously known
for other classes of partial differential operators with 
singular interactions; see
\cite{Ex05,Ex06} for Schr\"odinger operators with point interactions,
\cite{Da06, FK15}  for Robin Laplacians,  \cite{AMV15}  for Dirac operators
with shell-interactions, and \cite{BFKLR} for Schr\"odinger operators
with $\delta$-interactions supported on curves in $\dR^3$.

\subsection{Definition of the Hamiltonian}

To define the operators of study we first introduce notations
for some standard function spaces.
The $L^2$-spaces $(L^2(\dR^3), (\cdot,\cdot)_{\dR^3})$, 
$(L^2(\dR^3;\dC^3), (\cdot,\cdot)_{\dR^3})$
and the $L^2$-based Sobolev space $H^1(\dR^3)$ are defined in the usual way.
For a Lipschitz surface $\Sigma\subset\dR^3$,
which is not necessarily closed or bounded (\cf~\cite[Sec. 2.3]{BEL14_RMP})
we define the $L^2$-space 
$(L^2(\Sigma), (\cdot,\cdot)_\Sigma)$ by means of the 
natural surface measure on $\Sigma$. 
For $u\in H^1(\dR^3)$ its trace $u|_{\Sigma}$ is well-defined
as a function in $L^2(\Sigma)$.

Let $\aa  >  0$ be a fixed constant and let $\Sigma\subset\dR^3$
be a Lipschitz surface.
According to~\cite[Sec. 2]{BEKS94} (see also \cite[Prop. 3.1]{BEL14_RMP}) the symmetric densely defined 
quadratic form
\begin{equation}\label{def:frm}
	\fra_{\aa,\Sigma}[u] := 
	\|\nabla u\|_{\dR^3}^2 - \alpha \|u|_\Sigma\|^2_{\Sigma},
	\qquad
	\dom \fra_{\aa,\Sigma} := H^1(\dR^3),
\end{equation}	
is closed and lower-semibounded in $L^2(\dR^3)$.
%
%
\begin{dfn}\label{dfn:Op}
	The self-adjoint operator $\sfH_{\aa,\Sigma}$ acting in $L^2(\dR^3)$ 
	associated to the form $\fra_{\aa,\Sigma}$ in~\eqref{def:frm}
	via the first representation theorem (\cite[Thm. VI.2.1]{Kato})
	is called \emph{Schr\"odinger operator with $\delta$-interaction}
	of strength $\aa > 0$ supported on $\Sigma$.
\end{dfn}

\subsection{Cones}
In our considerations cones serve as supports
of $\delta$-interactions. Further, we explain what we understand by cones
in the present paper.

Let $\cT \subset \dS^2$ be a $C^2$-smooth loop
on the two-dimensional 
unit sphere $\dS^2\subset\dR^3$.
The length of $\cT$ is denoted by $|\cT|$.
It is always implicitly assumed that $\cT$ has no self-intersections.
We distinguish between \emph{circular loops} (or simply \emph{circles})
and \emph{non-circular loops}. 
A circle on $\dS^2$ will be occasionally denoted by $\cC$, 
and we  point out that $|\cC|\le 2\pi$.

The $C^2$-smooth \emph{conical surface} (or simply \emph{cone}) 
$\Sigma_R(\cT) \subset \dR^3$ of radius $R \in ( 0, +\infty ]$ 
with a $C^2$-smooth loop $\cT\subset\dS^2$ as the~\emph{cross-section}  is defined by 
\begin{equation}\label{def:S}
	 \Sigma_R(\cT) := \big\{ r \cT \in\dR^3 \colon r\in [0,R) \big\}.
\end{equation}	  
The cone $\Sigma_R(\cT)$ is called \emph{finite} (or \emph{truncated}) if $R < \infty$ and
\emph{infinite} if $R = +\infty$, respectively. 
The cross-section of $\Sigma_R(\cT)$ can be easily recovered by the formula 
$\cT = \dS^2\cap\Sigma_R(\cT)$. 
The cone $\Sigma_R(\cT)$ is called \emph{circular} if its 
cross-section $\cT$ is a circle and \emph{non-circular}, otherwise. 
We remark also that infinite circular cone with the 
cross-section of length $2\pi$ is, in fact, a plane.
Finally, note that $\Sigma_R(\cT)$ is, in particular, a Lipschitz surface.

\subsection{Main results}

In the following, for a lower-semibounded
self-adjoint operator $\sfH$ we denote by $E_1(\sfH)$ its lowest
eigenvalue (if it exists);
the discrete and essential spectra of $\sfH$ will be denoted by
$\sd(\sfH)$ and $\sess(\sfH)$, respectively. We denote by $\sfP_\cI(\sfH)$ the spectral projector for $\sfH$ corresponding to a Borel set  $\cI\subset\dR$.
By $\# \cM$ we understand
the cardinality of a discrete set $\cM$. 

It follows from the results in \cite{BEL14_RMP, BEKS94}
that the essential spectrum of Schr\"odinger operators with $\delta$-interactions
supported on finite $C^2$-smooth cones coincides with the set $[0,+\infty)$. 
In the first theorem of the paper we collect our main results on
the bound states induced by $\delta$-interactions supported on finite cones.
%
\begin{thm}\label{thm1}
	Let $\cC\subset\dS^2$ be a circle and  
	$\cT \subset\dS^2$ be a $C^2$-smooth non-circular loop
	such that $L := |\cC| = |\cT| \in (0,2\pi]$. 
	Let $\Gamma_R := \Sigma_R(\cC)$ and $\Lambda_R := \Sigma_R(\cT)$
	be finite cones of radius $R > 0$ as in~\eqref{def:S}
	with the cross-sections $\cC$ and $\cT$,
	respectively.
	Let the self-adjoint operators $\sfH_{\aa,\Gamma_R}$ and $\sfH_{\aa,\Lambda_R}$ 
	be as in Definition~\ref{dfn:Op}. Then the following hold.
	\begin{myenum}
		\item $\#\sd(\sfH_{\aa,\Gamma_R}) \ge 1$ 
		if, and only if, $\aa > \aa_{\rm cr}$ for certain
		$\aa_{\rm cr} = \aa_{\rm cr}(L,R) > 0$.
		\item  $\#\sd(\sfH_{\aa,\Lambda_R}) \ge 1$ for all $\aa \ge \aa_{\rm cr}$
		(the borderline case $\aa = \aa_{\rm cr}$ is included)
		and the spectral isoperimetric inequality 
		\[		
			E_1(\sfH_{\aa,\Lambda_R}) < E_1(\sfH_{\aa,\Gamma_R})
		\] 
		is satisfied for all $\aa > \aa_{\rm cr}$.
	\end{myenum}
\end{thm}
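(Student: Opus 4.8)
The plan is to run the Birman--Schwinger principle and reduce the comparison of principal eigenvalues to a comparison of the largest eigenvalues of the associated Birman--Schwinger operators, the latter being controlled by a transplanted test function. For $\lm<0$ set $\kp=\sqrt{-\lm}$ and let $\cQ_\Sigma(\lm)$ be the integral operator in $L^2(\Sigma)$ with kernel $\tfrac{\aa}{4\pi}\,e^{-\kp|x-y|}/|x-y|$, $x,y\in\Sigma$. For finite cones $\cQ_\Sigma(\lm)$ is compact, positive, with strictly positive kernel, and the Birman--Schwinger principle gives $\lm\in\sd(\sfH_{\aa,\Sigma})$ exactly when $1\in\spec(\cQ_\Sigma(\lm))$; moreover $\lm\mapsto\cQ_\Sigma(\lm)$ is increasing, so $E_1(\sfH_{\aa,\Sigma})$ is the unique $\lm<0$ at which the largest eigenvalue $\mu_1(\cQ_\Sigma(\lm))$ equals $1$. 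Hence, to obtain $E_1(\sfH_{\aa,\Lambda_R})<E_1(\sfH_{\aa,\Gamma_R})$ it suffices to show at $\lm_0:=E_1(\sfH_{\aa,\Gamma_R})$ that $\mu_1(\cQ_{\Lambda_R}(\lm_0))>\mu_1(\cQ_{\Gamma_R}(\lm_0))=1$: since $\mu_1(\cQ_{\Lambda_R}(\cdot))$ is increasing and already exceeds $1$ at $\lm_0$, its crossing of the value $1$, which is $E_1(\sfH_{\aa,\Lambda_R})$, must occur at a strictly more negative energy.

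Parametrize both loops by arc length, $\cC=\{\omg_\cC(t)\}$ and $\cT=\{\omg_\cT(t)\}$ with $t\in[0,L)$, so that points of a cone are $r\omg(t)$ carrying surface measure $r\,\rmd r\,\rmd t$, and the chord obeys $|r\omg(t)-r'\omg(t')|^2=(r-r')^2+rr'\,|\omg(t)-\omg(t')|^2$. Let $\phi$ be the principal eigenfunction of $\cQ_{\Gamma_R}(\lm_0)$; positivity of the kernel lets us take $\phi>0$, and uniqueness together with the rotational symmetry of $\Gamma_R$ forces $\phi=\phi(r)$. I would use the \emph{same radial profile} as a test function on $\Lambda_R$, namely $\psi(r,t):=\phi(r)$. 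The two normalizations coincide, $\|\psi\|^2_{\Lambda_R}=\|\phi\|^2_{\Gamma_R}$, so after performing the $t,t'$ integrations and discarding the common positive factor $\tfrac{\aa}{4\pi}\,\phi(r)\phi(r')\,rr'$, the desired strict inequality reduces to the pointwise bound
\[
	I_\cT(r,r')\ge I_\cC(r,r'),\qquad
	I_\cT(r,r'):=\int_0^L\!\!\int_0^L \frac{e^{-\kp\, d_\cT}}{d_\cT}\,\rmd t\,\rmd t',\quad
	d_\cT=\big((r-r')^2+rr'\,|\omg_\cT(t)-\omg_\cT(t')|^2\big)^{1/2},
\]
with strict inequality on a set of positive measure in $(r,r')$.

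The inequality $I_\cT\ge I_\cC$ is precisely the statement that the circle minimizes a chord-energy functional, and this is where the construction pays off. Write $\Phi(\xi):=g\big((a+b\xi)^{1/2}\big)$ with $g(d)=e^{-\kp d}/d$, $a=(r-r')^2$, $b=rr'\ge0$. A direct computation shows $g$ is convex and nonincreasing, whence $\Phi$ is convex and nonincreasing, being the composition of such a $g$ with the concave map $\xi\mapsto(a+b\xi)^{1/2}$. For fixed arc-length separation $u$, Jensen's inequality gives $\int_0^L\Phi\big(|\omg(t)-\omg(t+u)|^2\big)\,\rmd t\ge L\,\Phi\big(\tfrac1L\int_0^L|\omg(t)-\omg(t+u)|^2\,\rmd t\big)$, while a Fourier-series computation under the arc-length constraint shows that the mean squared chord $\tfrac1L\int_0^L|\omg(t)-\omg(t+u)|^2\,\rmd t$ is maximized, at every $u$, by the circle, strictly for some $u$ whenever $\cT$ is non-circular. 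Since $\Phi$ is nonincreasing, combining these facts and integrating over $u$ yields $I_\cT\ge I_\cC$ with the required strictness, and the spectral isoperimetric inequality follows.

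Finally, for the existence of a bound state for all $\aa\ge\aa_{\rm cr}$ I would run the identical comparison at the threshold $\lm=0$, where the kernel becomes the Coulomb kernel $1/(4\pi|x-y|)$, still convex and nonincreasing in the chord so that the argument applies verbatim, giving $\mu_1(\cQ_{\Lambda_R}(0))>\mu_1(\cQ_{\Gamma_R}(0))$. The Birman--Schwinger characterization behind part (i) identifies $\aa_{\rm cr}$ by $\mu_1(\cQ_{\Gamma_R}(0))=1$ (the value being approached as $\lm\to0^-$ but not attained at any $\lm<0$, which is exactly why the circular cone has no bound state at $\aa=\aa_{\rm cr}$). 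Hence at $\aa=\aa_{\rm cr}$ one has $\mu_1(\cQ_{\Lambda_R}(0))>1$; since for the finite cone $\lm\mapsto\mu_1(\cQ_{\Lambda_R}(\lm))$ is continuous up to the threshold (the kernels converge in Hilbert--Schmidt norm) and tends to $0$ as $\lm\to-\infty$, the intermediate value theorem produces some $\lm<0$ with $\mu_1(\cQ_{\Lambda_R}(\lm))=1$, i.e.\ a negative eigenvalue, and the case $\aa>\aa_{\rm cr}$ follows a fortiori. The main obstacle is the chord-energy inequality $I_\cT\ge I_\cC$: verifying the convexity of $\Phi$, proving the sharp mean-squared-chord bound, and tracking its equality case to secure strictness is the analytic heart of the argument.
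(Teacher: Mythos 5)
Your proposal is correct and follows essentially the same route as the paper: the Birman--Schwinger reduction, the positive rotationally invariant ground state of the BS operator on the circular cone transplanted as a radial test function onto $\Lambda_R$, the chord identity $|r\omg(t)-r'\omg(t')|^2=(r-r')^2+rr'|\omg(t)-\omg(t')|^2$, and the reduction to the knot-energy inequality for a convex decreasing $f$, with the threshold case $\kp=0$ handling $\aa=\aa_{\rm cr}$. The only differences are cosmetic: you verify convexity of the kernel by composition rules rather than direct differentiation, you deduce radiality of the ground state from simplicity plus symmetry rather than an explicit Fourier decomposition, and you sketch the Jensen/L\"uk\H{o} proof of the chord-energy inequality that the paper simply cites from Exner--Harrell--Loss and Abrams et al.
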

The strategy of the proof of Theorem~\ref{thm1} consists in reducing
the spectral problems for Schr\"odinger operators $\sfH_{\aa,\Gamma_R}$ and $\sfH_{\aa,\Lambda_R}$ to spectral problems for operator-valued functions acting 
in $L^2$-spaces over respective cones. For this reduction 
we employ a generalization of the Birman-Schwinger 
principle~\cite{B95, BEKS94}. 
In further analysis, a crucial role is played by the result that circles are
unique minimizers for certain classes of knot energies~\cite{ACGJ03, EHL06}.

It follows from the results of~\cite{BEL14_JPA, BP15, P15} that 
$-\aa^2/4$ is the lowest point of the essential spectrum
for a Schr\"odinger operator with $\delta$-interaction
of strength $\aa > 0$ supported on an infinite $C^2$-smooth cone. 
It is also proven in~\cite{BEL14_JPA} 
that the discrete spectrum below the point $-\aa^2/4$
in the case of infinite circular cones with the cross-section of length $L\in (0, 2\pi)$
is non-empty (and even infinite). Further analysis of this discrete spectrum
is carried out in~\cite{LO15}. In the second theorem of the paper we 
collect our main results on the bound states induced by $\delta$-interactions
supported on infinite cones. 
\begin{thm}\label{thm2}
	Let $\cC\subset\dS^2$ be a circle and  
	$\cT \subset\dS^2$ be a non-circular $C^2$-smooth loop
	such that $|\cC| = |\cT| \in (0,2\pi)$.
	Let $\Gamma_\infty := \Sigma_\infty(\cC)$ and 
	$\Lambda_\infty := \Sigma_\infty(\cT)$
	be infinite cones as in~\eqref{def:S}
	with the cross-sections $\cC$ and $\cT$,
	respectively.
	Let the self-adjoint operators $\sfH_{\aa,\Gamma_\infty}$ and 
	$\sfH_{\aa,\Lambda_\infty}$ be as in Definition~\ref{dfn:Op}. 
	Then for all $\aa > 0$ the following hold.
	\begin{myenum}
		\item $\#\sd( \sfH_{\aa,\Lambda_\infty}) \ge 1$.
		\item The spectral isoperimetric inequality 
		$E_1( \sfH_{\aa,\Lambda_\infty}) \le E_1( \sfH_{\aa,\Gamma_\infty})$
		is satisfied.				
	\end{myenum}	
\end{thm}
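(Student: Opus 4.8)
The plan is to derive the isoperimetric inequality~(ii) first and to read off~(i) from it. Recall that by~\cite{BEL14_JPA, BP15, P15} one has $\inf\sess(\sfH_{\aa,\Gamma_\infty}) = \inf\sess(\sfH_{\aa,\Lambda_\infty}) = -\aa^2/4$, and by~\cite{BEL14_JPA} the infinite circular cone already possesses eigenvalues below $-\aa^2/4$, so that $E_1(\sfH_{\aa,\Gamma_\infty}) = -\kappa_\cC^2$ exists for some $\kappa_\cC > \aa/2$. Hence~(ii) forces~(i): once $E_1(\sfH_{\aa,\Lambda_\infty}) \le E_1(\sfH_{\aa,\Gamma_\infty}) < -\aa^2/4$ is known, the bottom of the spectrum of $\sfH_{\aa,\Lambda_\infty}$ lies strictly below the threshold of its essential spectrum and is therefore a discrete eigenvalue, giving $\#\sd(\sfH_{\aa,\Lambda_\infty}) \ge 1$. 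All the work is thus in~(ii).

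For~(ii) I would invoke the generalised Birman--Schwinger principle of~\cite{B95, BEKS94}: for $\kappa > \aa/2$ the bounded, positivity-improving integral operator $\aa\sfS_{\Sigma_\infty}(-\kappa^2)$ on $L^2(\Sigma_\infty)$ with kernel $\frac{1}{4\pi}e^{-\kappa|x-y|}/|x-y|$ has its essential spectrum below $1$, and $\sfH_{\aa,\Sigma_\infty}$ has an eigenvalue $\le -\kappa^2$ as soon as $\sup\sigma\big(\aa\sfS_{\Sigma_\infty}(-\kappa^2)\big) \ge 1$. Taking $\kappa = \kappa_\cC$, rotational symmetry of $\Gamma_\infty$ together with the positivity-improving property forces the principal eigenfunction of $\aa\sfS_{\Gamma_\infty}(-\kappa_\cC^2)$ to be rotationally invariant, i.e. of the form $\psi = \psi(r) > 0$ in the coordinates $x = r\gamma(s)$, $r \in [0,\infty)$, $s\in[0,L)$, where the surface measure is $r\,\rmd r\,\rmd s$ and $\aa\sfS_{\Gamma_\infty}(-\kappa_\cC^2)\psi = \psi$. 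The new idea is to transplant this radial profile onto the non-circular cone by choosing the test function $\varphi(r,s) := \psi(r)$ on $\Lambda_\infty$. Since the measure $r\,\rmd r\,\rmd s$ is the same for both cones, $(\varphi,\varphi)_{\Lambda_\infty} = (\psi,\psi)_{\Gamma_\infty}$, and the elementary identity $|x-y| = \sqrt{(r-r')^2 + rr'|\gamma(s)-\gamma(s')|^2}$ for $x = r\gamma(s)$, $y = r'\gamma(s')$ gives
\begin{equation*}
\aa\big(\sfS_{\Lambda_\infty}(-\kappa_\cC^2)\varphi,\varphi\big)_{\Lambda_\infty}
= \aa\int_0^\infty\!\!\int_0^\infty \psi(r)\psi(r')\,I_{r,r'}[\cT]\,rr'\,\rmd r\,\rmd r',
\qquad
(\psi,\psi)_{\Gamma_\infty} = \aa\int_0^\infty\!\!\int_0^\infty \psi(r)\psi(r')\,I_{r,r'}[\cC]\,rr'\,\rmd r\,\rmd r',
\end{equation*}
where $I_{r,r'}[\gamma] := \int_0^L\!\int_0^L \Psi_{r,r'}(|\gamma(s)-\gamma(s')|)\,\rmd s\,\rmd s'$ and $\Psi_{r,r'}(\dd) := \frac{1}{4\pi}e^{-\kappa_\cC\sqrt{(r-r')^2+rr'\dd^2}}/\sqrt{(r-r')^2+rr'\dd^2}$, the second identity using the circular eigenvalue equation. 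Consequently the Rayleigh quotient of $\varphi$ equals the ratio of the two $I$-weighted integrals.

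The heart of the matter is the purely geometric claim that for every fixed $r,r'\ge 0$ the circle minimizes $I_{r,r'}[\,\cdot\,]$, i.e. $I_{r,r'}[\cT]\ge I_{r,r'}[\cC]$. I would deduce it from the mean-chord inequality of~\cite{EHL06, ACGJ03}: any closed, arc-length-parametrised curve of length $L$ satisfies $\int_0^L |\gamma(s+u)-\gamma(s)|^2\,\rmd s \le \frac{L^3}{\pi^2}\sin^2(\pi u/L)$ for every $u$, with equality for the planar circle of length $L$; and the spherical circle of latitude $\cC$, having Euclidean radius $L/2\pi$, realises exactly these chord lengths. To pass from squared chords to $\Psi_{r,r'}$ one checks that $\Psi_{r,r'}$ is non-increasing and that $t\mapsto\Psi_{r,r'}(\sqrt t)$ is convex: writing $\rho(t)=\sqrt{(r-r')^2+rr't}$ (concave and increasing) and $h(\rho)=\frac{1}{4\pi}e^{-\kappa_\cC\rho}/\rho$ (convex and decreasing, since $(e^{-\kappa_\cC\rho}/\rho)'' = e^{-\kappa_\cC\rho}(\kappa_\cC^2/\rho + 2\kappa_\cC/\rho^2 + 2/\rho^3)>0$), the composition $h\circ\rho$ is convex. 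Then for each $u$ Jensen's inequality for the convex decreasing $g(t)=\Psi_{r,r'}(\sqrt t)$, combined with the mean-chord bound, yields $\int_0^L \Psi_{r,r'}(|\gamma(s+u)-\gamma(s)|)\,\rmd s \ge \int_0^L \Psi_{r,r'}(|\cC(s+u)-\cC(s)|)\,\rmd s$; integrating over $u$ gives $I_{r,r'}[\cT]\ge I_{r,r'}[\cC]$. Because $\psi(r)\psi(r')\,rr'\ge 0$, this survives the integration over $r,r'$, so the Rayleigh quotient of $\varphi$ is $\ge 1$, whence $\sup\sigma\big(\aa\sfS_{\Lambda_\infty}(-\kappa_\cC^2)\big)\ge 1$ and the Birman--Schwinger principle produces an eigenvalue of $\sfH_{\aa,\Lambda_\infty}$ at or below $-\kappa_\cC^2 = E_1(\sfH_{\aa,\Gamma_\infty})$, which is~(ii). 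I expect the main obstacle to be not the (elementary) convexity check but the analytic justification of the Birman--Schwinger machinery on the non-compact cone: boundedness of $\sfS_{\Sigma_\infty}(-\kappa^2)$, the precise location of its essential spectrum relative to $1$, and the existence together with the rotational symmetry and positivity of the principal eigenfunction $\psi$.
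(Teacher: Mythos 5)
Your overall mechanism for part (ii) --- transplanting the radial profile of the circular cone's principal Birman--Schwinger eigenfunction onto the non-circular cone and controlling the resulting Rayleigh quotient by the chord inequalities of \cite{ACGJ03, EHL06} --- is exactly the mechanism of the paper, but the paper deploys it only for \emph{finite} cones (Theorem~\ref{thm1}\,(ii)), where $\Sigma$ is compact and the whole of Subsection~\ref{ssec:BS} (compactness of $\sfS_\Sigma(\kp)$, Propositions~\ref{lem:BS1},~\ref{prop:BS} and~\ref{lem:BS2}) is available. Your geometric core --- monotonicity and convexity of $t\mapsto e^{-\kp\sqrt{(r-r')^2+rr't}}/\sqrt{(r-r')^2+rr't}$ and the reduction to the knot-energy result, Proposition~\ref{thm:knot_energy} --- is correct and coincides with the computation in Subsection~\ref{ssec:proof1}.

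The gap is the one you flag yourself in your last sentence, and it is not a routine technicality. On the infinite cone $\sfS_{\Gamma_\infty}(\kp)$ is not compact, so: (a) you do not know that $\sup\sigma\big(\aa\sfS_{\Gamma_\infty}(\kp)\big)$ is an attained eigenvalue rather than essential spectrum, so there may be no principal eigenfunction $\psi\in L^2(\Gamma_\infty)$ to transplant, and the Perron--Frobenius argument of Proposition~\ref{prop:BS} (which presupposes an attained largest eigenvalue of a compact operator) does not apply; (b) the implication ``$\sup\sigma\big(\aa\sfS_{\Lambda_\infty}(\kp)\big)\ge 1$ implies that $\sfH_{\aa,\Lambda_\infty}$ has spectrum at or below $-\kp^2$'' is the content of Proposition~\ref{lem:BS1}, whose proof rests on Lemma~\ref{lem:cont} and Theorem~\ref{thm:BS}, both formulated for compact $\Sigma$; to run it here you would at least need the essential spectrum of $\aa\sfS_{\Lambda_\infty}(\kp)$ to stay below $1$ for $\kp>\aa/2$, a statement of comparable depth to $\sess(\sfH_{\aa,\Lambda_\infty})=[-\aa^2/4,\infty)$, which Remark~\ref{rem:strong} notes is not available in the literature. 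The paper sidesteps all of this by truncation: it proves the strict inequality for finite cones, shows via monotone convergence of the forms~\eqref{def:frm} that $\sfH_{\aa,\Sigma_R}$ converges to $\sfH_{\aa,\Sigma_\infty}$ in the strong resolvent sense (Lemma~\ref{lem:conv}), and then combines \cite[Satz 9.26\,(b)]{W13} with $\inf\sess=-\aa^2/4$ (Proposition~\ref{prop:ess}) and with \cite[Thm. 3.2]{BEL14_JPA} to pass to the limit $R\arr+\infty$ in the finite-cone inequality; this is also where the non-strict ``$\le$'' in the statement comes from. To complete your direct argument you would have to supply the missing spectral analysis of $\sfS_{\Sigma_\infty}(\kp)$; the truncation route is the far cheaper fix.
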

The key idea of the proof is to consider $\delta$-interactions
supported on $C^2$-smooth finite cones $\Gamma_R := \Sigma_R(\cC)$ 
and $\Lambda_R := \Sigma_R(\cT)$ 
of radius $R > 0$.
Using convergence results for monotone families of quadratic forms, 
we show that $\sfH_{\aa,\Gamma_R}$ and $\sfH_{\aa,\Lambda_R}$ 
converge (as $R \arr \infty$)
in the strong resolvent sense to $\sfH_{\aa,\Gamma_\infty}$ 
and $\sfH_{\aa,\Lambda_\infty}$, respectively. 
Finally, we combine \cite[Thm. 3.2]{BEL14_JPA}
with some standard results on spectral convergence
and with the spectral isoperimetric inequality 
in Theorem~\ref{thm1}\,(ii) to get both the
statements of Theorem~\ref{thm2}.
\begin{remark}\label{rem:strong}	
	Let the notations be the same as in formulations
	of Theorems~\ref{thm1} and~\ref{thm2}.
	As a byproduct of the strong resolvent convergence described above
	and of standard spectral convergence
	results~\cite[Satz 2.58\,(a) and Satz 9.24\,(b)]{W13}, 
	one can show that for any closed interval $\cI\subset (-\infty,-\aa^2/4)$ the functions 
	\begin{equation}\label{eq:functions}
		R\mapsto \dim\ran \sfP_\cI(\sfH_{\aa,\Gamma_R})
		\qquad\text{and}
		\qquad R\mapsto \dim\ran \sfP_\cI(\sfH_{\aa,\Lambda_R})
	\end{equation}	
	are bounded.  
	Moreover, according to~\cite[Thm. 2.1]{BEL14_JPA}
	we also have $\sess(\sfH_{\aa,\Gamma_\infty}) = [-\aa^2/4,\infty)$ and,
	thus, for any closed interval $\cI\subset (-\aa^2/4,0)$ 
	the first function in~\eqref{eq:functions} tends
	to infinity as $R\rightarrow\infty$. We expect that 
	the second function in~\eqref{eq:functions} does the same.
	However, in order to show this, it is insufficient to know only that $\inf\sess(\sfH_{\aa,\Lambda_\infty})= -\aa^2/4$, one needs to verify 
	$\sess(\sfH_{\aa,\Lambda_\infty}) = [-\aa^2/4,\infty)$. 
	To the best of our knowledge, the latter is not contained in the existing literature, but it can be shown in a way similar to~\cite[Thm. 1]{P15}.

	Summing up,
	as the radius grows,
	the number of negative
	eigenvalues	counted with multiplicities for $\delta$-interactions
	on finite cones	remains bounded in any closed sub-interval of $(-\infty,-\aa^2/4)$
	and tends to infinity in any closed sub-interval of $(-\aa^2/4,0)$. 
\end{remark}
We  conclude this subsection by some open questions. Recall that according to Theorem~\ref{thm2}\,(i) the discrete spectrum of $\sfH_{\aa,\Lambda_\infty}$
is non-empty if $|\Lambda_\infty\cap\dS^2| < 2\pi$. By analogy 
with~\cite{DEK01} one may conjecture that the discrete spectrum is also non-empty if $|\Lambda_\infty\cap\dS^2| \ge 2\pi$,
of course, excluding the case when $\Lambda_\infty$ is a plane. 
In view of~\cite[Thm. 3.2]{BEL14_JPA}, 
it presents a subtle problem to check whether the discrete
spectrum of $\sfH_{\aa,\Lambda_\infty}$ is finite or infinite.
We expect that the discrete spectrum is infinite for  $|\Lambda_\infty\cap\dS^2| < 2\pi$. On the other hand, for $|\Lambda_\infty\cap\dS^2| \ge 2\pi$
it might be that the discrete spectrum is finite, at least under some extra
assumptions on the shape of $\Lambda_\infty\cap\dS^2$. 
\subsection*{Organization of the paper}

Section~\ref{sec:prelim} contains some preliminary material that will be
used in the proofs of Theorems~\ref{thm1} and~\ref{thm2}.
Namely, in Subsection~\ref{ssec:BS} we provide Birman-Schwinger principle and
prove some related statements. In Subsection~\ref{ssec:param} we introduce natural coordinates on cones
and derive some consequences of this parametrization. Energies of knots and
their minimizers are briefly discussed in Subsection~\ref{ssec:knots}.
Section~\ref{sec:proofs} contains proofs of the main results together with some auxiliary statements: Theorem~\ref{thm1} is proven in Subsection~\ref{ssec:proof1} and Theorem~\ref{thm2} in Subsection~\ref{ssec:proof2}.

\section{Preliminaries}
\label{sec:prelim}

\subsection{Birman-Schwinger principle}
\label{ssec:BS}
The BS-principle is a classical and a powerful tool for the spectral analysis of 
Schr\"odinger operators. Its generalization, which encompasses 
$\delta$-interactions supported on hypersurfaces, is derived in~\cite{BEKS94}; see also~\cite{BLL13_AHP} and~\cite{B95}.

Let $\lm \le 0$ and set $\kp := \sqrt{-\lm}$. Green's function corresponding to the 
differential expression $-\Delta + \kp^2$ in $\dR^3$ has the following well-known form
\begin{equation} \label{freeG3}
	G_\kp(x\!-\!y) = \frac{e^{-\kp|x-y|}}{4\pi | x \!- \! y|}.
\end{equation}
Let $\Sigma\subset\dR^3$ be a compact Lipschitz surface,
which is not necessarily closed; \cf \cite[Sec. 2.3]{BEL14_RMP}.
Further, we introduce the following mapping
\begin{equation}\label{def:SL}
	\big(\sfS_{\Sigma}(\kp) \psi\big)(x) 
	:= 
	\int_{\Sigma} G_\kp(x-y)\psi(y)\dd \s(y),
\end{equation}
where $\dd \s$ is the natural surface measure on $\Sigma$.
The mapping $\sfS_\Sigma(\kp)$, $\kp \ge 0$, extends to a compact
operator in $L^2(\Sigma)$; \cf \cite[Rem. 2.1, Lem. 3.2]{BEKS94} 
and~\cite[Sec. 2]{Go12}.
As there is no danger of confusion, we denote this extension again by~$\sfS_{\Sigma}(\kp)$.
According to~\cite{B95} the operator $\sfS_{\Sigma}(\kp)$, $\kp > 0$,
is self-adjoint and non-negative.
We make use of the following hypothesis to shorten the formulations.
\begin{hyp}\label{hypothesis}
	Let $\aa > 0$ and let
	$\Sigma\subset\dR^3$ be a compact Lipschitz surface as above.
	Let the self-adjoint operator $\sfH_{\aa,\Sigma}$ be as in 
	Definition~\ref{dfn:Op} and the operator-valued function 
	$[0,+\infty) \ni \kp \mapsto \sfS_{\Sigma}(\kp)$
	be as in~\eqref{def:SL}.
	Let $\mu_\Sigma(\kp) > 0$ be the largest eigenvalue of $\sfS_\Sigma(\kp)$
	and 
	$E_1(\sfH_{\aa,\Sigma}) < 0$ be the lowest eigenvalue of $\sfH_{\aa,\Sigma}$
	(if it exists).
\end{hyp}
The essential spectrum of $\sfH_{\aa,\Sigma}$ coincides with the
set $[0,+\infty)$; \cf \cite[Thm. 3.2]{BEKS94}.
The next theorem contains the BS-principle 
for the negative spectrum of the operator $\sfH_{\aa,\Sigma}$; for the proof 
see~\cite[Lem. 2.3\,(iv)]{BEKS94}. 
\begin{thm}\label{thm:BS}	
	Assume that Hypothesis~\ref{hypothesis} holds.
	Then the relation
	\[
		\dim\ker\big(\sfH_{\aa,\Sigma} + \kp^2\big) 
		= 
		\dim\ker\big( I - \aa \sfS_\Sigma(\kp) \big)
	\]	
	is satisfied for all $\kp > 0$.
\end{thm}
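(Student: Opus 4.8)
The plan is to pass back and forth between eigenfunctions of $\sfH_{\aa,\Sigma}$ at energy $-\kp^2$ and null vectors of $I-\aa\sfS_\Sigma(\kp)$ via the trace map. Write $\gamma\colon H^1(\dR^3)\to L^2(\Sigma)$, $\gamma u := u|_\Sigma$, for the bounded trace operator, and let $\gamma^*\colon L^2(\Sigma)\to H^1(\dR^3)^*$ be its adjoint, which realizes $\psi\in L^2(\Sigma)$ as the single-layer distribution $\psi\,\dd\s$ supported on $\Sigma$. Since $\kp>0$, the operator $-\Delta+\kp^2$ is boundedly invertible on $L^2(\dR^3)$; let $R_0$ denote its resolvent, whose integral kernel is $G_\kp$ from~\eqref{freeG3}, and which extends by duality to a map $H^1(\dR^3)^*\to H^1(\dR^3)$. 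With this notation the defining identity $\sfS_\Sigma(\kp)=\gamma R_0\gamma^*$ is immediate from~\eqref{def:SL}. For the forward inclusion, if $u\in\ker(\sfH_{\aa,\Sigma}+\kp^2)$ then by the first representation theorem $u\in H^1(\dR^3)$ and
\[
 (\nabla u,\nabla v)_{\dR^3} - \aa\,(\gamma u,\gamma v)_\Sigma + \kp^2 (u,v)_{\dR^3} = 0 \qquad\text{for all } v\in H^1(\dR^3),
\]
which is the weak form of $(-\Delta+\kp^2)u = \aa\,\gamma^*\gamma u$. Applying $R_0$ gives $u = \aa R_0\gamma^*\gamma u$, and taking the trace yields $\gamma u = \aa\,\gamma R_0\gamma^*\gamma u = \aa\,\sfS_\Sigma(\kp)\,\gamma u$; hence $\psi:=\gamma u$ lies in $\ker(I-\aa\sfS_\Sigma(\kp))$.

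For the reverse inclusion, given $\psi\in\ker(I-\aa\sfS_\Sigma(\kp))$ I set $u := \aa R_0\gamma^*\psi$. Using the mapping properties of $\gamma^*$ and $R_0$ one checks $u\in H^1(\dR^3)$, and by construction $\gamma u = \aa\,\gamma R_0\gamma^*\psi = \aa\,\sfS_\Sigma(\kp)\psi = \psi$. Then for every $v\in H^1(\dR^3)$,
\[
 (\nabla u,\nabla v)_{\dR^3} + \kp^2(u,v)_{\dR^3} = \ps{(-\Delta+\kp^2)u}{v} = \aa\ps{\gamma^*\psi}{v} = \aa(\psi,\gamma v)_\Sigma = \aa(\gamma u,\gamma v)_\Sigma,
\]
so that $\fra_{\aa,\Sigma}[u,v] = -\kp^2(u,v)_{\dR^3}$ for all $v\in H^1(\dR^3)$, whence $u\in\dom\sfH_{\aa,\Sigma}$ with $\sfH_{\aa,\Sigma}u = -\kp^2 u$, i.e. $u\in\ker(\sfH_{\aa,\Sigma}+\kp^2)$.

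It remains to match dimensions. The trace map $u\mapsto\gamma u$ sends $\ker(\sfH_{\aa,\Sigma}+\kp^2)$ into $\ker(I-\aa\sfS_\Sigma(\kp))$; it is surjective by the reconstruction $\psi\mapsto\aa R_0\gamma^*\psi$ of the previous paragraph, and injective, since $\gamma u=0$ for $u$ in the kernel forces $u=\aa R_0\gamma^*\gamma u = 0$. Thus the two kernels are isomorphic and their dimensions coincide, which is the claim. The main obstacle I anticipate is the rigorous justification of the mapping properties invoked in the reverse direction on a merely Lipschitz (possibly non-closed) surface: one must ensure that $\gamma^*$ carries $L^2(\Sigma)$ into the correct negative-order space so that $R_0\gamma^*\psi$ indeed lands in $H^1(\dR^3)$ — hence $u$ lies in the form domain and has a well-defined trace — and that the operator $\gamma R_0\gamma^*$ coincides with the integral operator~\eqref{def:SL}. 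These points rest on the trace theorem and on the boundedness and compactness of $\sfS_\Sigma(\kp)$ recorded after~\eqref{def:SL} (\cf~\cite{BEKS94, Go12}).
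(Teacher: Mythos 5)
The paper does not prove this theorem at all: it is quoted verbatim from the literature, with the proof delegated to \cite[Lem.~2.3\,(iv)]{BEKS94}. Your argument --- the bijection $u\mapsto \gamma u$, $\psi\mapsto \aa R_0\gamma^*\psi$ between $\ker(\sfH_{\aa,\Sigma}+\kp^2)$ and $\ker(I-\aa\sfS_\Sigma(\kp))$, built on the factorization $\sfS_\Sigma(\kp)=\gamma R_0\gamma^*$ --- is precisely the standard Birman--Schwinger argument that the cited lemma carries out (there in the more general setting of measure perturbations), and it is correct modulo the technical mapping properties you already flag, which are supplied by the trace theorem for Lipschitz surfaces and the kernel estimates recorded after~\eqref{def:SL}.
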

%
In the next simple lemma 
we characterize the bottom of the spectrum
of $\sfH_{\aa,\Sigma}$ as a function of $\aa$.
\begin{lem}\label{lem:cont}
	Assume that Hypothesis~\ref{hypothesis} holds.
	Then the function $[0,+\infty) \ni \aa \mapsto F_\Sigma(\aa) := 
	\inf\s(\sfH_{\aa,\Sigma})$ has the following properties.
	\begin{myenum}
		\item $F_\Sigma$ is continuous and non-increasing.
		\item $\ran F_\Sigma = (-\infty,0]$.
		\item If $F_\Sigma(\aa) < 0$, then $F_\Sigma(\aa') < F_\Sigma(\aa)$
		for any $\aa' > \aa$.
	\end{myenum}	
\end{lem}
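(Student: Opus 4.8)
The plan is to reduce everything to the behaviour of the top eigenvalue $\mu_\Sigma(\kp)$ of the Birman--Schwinger operator $\sfS_\Sigma(\kp)$ and to extract from Theorem~\ref{thm:BS} an explicit description of $F_\Sigma$. First I would record the properties of the scalar function $[0,+\infty)\ni\kp\mapsto\mu_\Sigma(\kp)$ that I need: it is continuous (indeed $\kp\mapsto\sfS_\Sigma(\kp)$ is norm-continuous, since the kernels $G_\kp$ differ by at most $\tfrac{1}{4\pi}|\kp-\kp'|$ pointwise on the compact surface $\Sigma$), it is strictly decreasing, and $\mu_\Sigma(\kp)\to 0$ as $\kp\to+\infty$. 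The monotonicity follows from the fact that $\sfS_\Sigma(\kp)$ is positivity improving: its kernel $G_\kp$ is strictly positive on $\Sigma\times\Sigma$, so by the Perron--Frobenius/Krein--Rutman argument $\mu_\Sigma(\kp)$ is simple with a strictly positive eigenfunction $\psi_\kp$, whence for $\kp<\kp'$ one gets $\mu_\Sigma(\kp)\ge(\sfS_\Sigma(\kp)\psi_{\kp'},\psi_{\kp'})_\Sigma>(\sfS_\Sigma(\kp')\psi_{\kp'},\psi_{\kp'})_\Sigma=\mu_\Sigma(\kp')$, the strict inequality coming from $G_\kp>G_{\kp'}$ pointwise. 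The decay as $\kp\to+\infty$ follows from a Schur-test estimate $\mu_\Sigma(\kp)\le\|\sfS_\Sigma(\kp)\|\le\sup_x\int_\Sigma G_\kp(x-y)\,\dd\s(y)\to 0$.

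Next I would use the kernel form of the Birman--Schwinger principle. By Theorem~\ref{thm:BS}, for $\kp>0$ the number $-\kp^2$ is an eigenvalue of $\sfH_{\aa,\Sigma}$ iff $1/\aa\in\sigp(\sfS_\Sigma(\kp))$, which in particular forces $1/\aa\le\mu_\Sigma(\kp)$. Setting $\aa_0:=1/\mu_\Sigma(0)>0$, I claim

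\[
	F_\Sigma(\aa)=\begin{cases}0,&0\le\aa\le\aa_0,\\ -\kp_1(\aa)^2,&\aa>\aa_0,\end{cases}
\]

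where $\kp_1(\aa)>0$ is the unique solution of $\mu_\Sigma(\kp)=1/\aa$. Indeed, since $\sess(\sfH_{\aa,\Sigma})=[0,+\infty)$ we always have $F_\Sigma(\aa)=\inf\s(\sfH_{\aa,\Sigma})\le 0$, and the negative spectrum is exactly $\{-\kp^2:\kp>0,\ 1/\aa\in\sigp(\sfS_\Sigma(\kp))\}$, whose infimum is attained at the largest admissible $\kp$. If $\aa\le\aa_0$ then $1/\aa\ge\mu_\Sigma(0)>\mu_\Sigma(\kp)$ for every $\kp>0$, so this set is empty and $F_\Sigma(\aa)=0$; if $\aa>\aa_0$ then $1/\aa<\mu_\Sigma(0)$, the equation $\mu_\Sigma(\kp)=1/\aa$ has a unique root $\kp_1(\aa)$, for $\kp>\kp_1(\aa)$ one has $\mu_\Sigma(\kp)<1/\aa$ so $-\kp^2$ is not an eigenvalue, while at $\kp_1(\aa)$ the value $1/\aa=\mu_\Sigma(\kp_1(\aa))$ is the top eigenvalue and hence lies in $\sigp(\sfS_\Sigma(\kp_1(\aa)))$. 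Thus $\kp_1(\aa)$ is the largest admissible $\kp$ and $F_\Sigma(\aa)=-\kp_1(\aa)^2<0$.

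With this formula the three assertions are immediate. For (i): on $[0,\aa_0]$ the function is identically $0$; on $(\aa_0,+\infty)$ the maps $\aa\mapsto 1/\aa$ and $\mu_\Sigma^{-1}$ are continuous and decreasing (the latter as inverse of a continuous strictly decreasing function), so $\aa\mapsto\kp_1(\aa)$ is continuous and increasing and $F_\Sigma$ is continuous and non-increasing there, and the two pieces match because $\kp_1(\aa)\to 0$ as $\aa\to\aa_0^{+}$. For (ii): $F_\Sigma$ attains $0$ on $[0,\aa_0]$, and as $\aa\to+\infty$ we have $1/\aa\to 0$, which forces $\kp_1(\aa)\to+\infty$ and hence $F_\Sigma(\aa)\to-\infty$; continuity and the intermediate value theorem then give $\ran F_\Sigma=(-\infty,0]$. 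For (iii): $F_\Sigma(\aa)<0$ means $\aa>\aa_0$, and $\aa'>\aa$ gives $1/\aa'<1/\aa$, hence $\kp_1(\aa')>\kp_1(\aa)$ by strict monotonicity of $\mu_\Sigma$, so $F_\Sigma(\aa')=-\kp_1(\aa')^2<-\kp_1(\aa)^2=F_\Sigma(\aa)$. The main work is thus concentrated in the first paragraph, namely the strict monotonicity and the decay of the top eigenvalue $\mu_\Sigma(\kp)$, which rest on the positivity-improving property of $\sfS_\Sigma(\kp)$ and on compactness estimates for its kernel, respectively; everything afterwards is a routine translation through the Birman--Schwinger principle.
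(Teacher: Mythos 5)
Your proof is correct, but it takes a genuinely different route from the paper. The paper argues entirely on the quadratic-form side: continuity in (i) is quoted from \cite[Lem.~3.3]{BEKS94} and monotonicity follows from the ordering of the forms $\fra_{\aa,\Sigma}$; (ii) is a two-line test-function computation, $F_\Sigma(\aa)\le\fra_{\aa,\Sigma}[\chi]\to-\infty$ for a fixed $\chi$ with nonvanishing trace; and (iii) plugs the ground state of $\sfH_{\aa,\Sigma}$ into $\fra_{\aa',\Sigma}$. You instead go through the Birman--Schwinger operator and derive the explicit formula $F_\Sigma(\aa)=-\big(\mu_\Sigma^{-1}(1/\aa)\big)^2$ for $\aa>1/\mu_\Sigma(0)$ and $F_\Sigma\equiv 0$ below that threshold. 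This is more machinery but it buys more: you obtain strict monotonicity of $F_\Sigma$ on $(\aa_{\rm cr},\infty)$ and, as byproducts, the statements of Lemma~\ref{lem:cont2}, Proposition~\ref{lem:BS1} and Proposition~\ref{prop:existence}. Two remarks on the inputs you use. First, the paper proves the strict monotonicity of $\mu_\Sigma$ (Lemma~\ref{lem:cont2}) \emph{from} Lemma~\ref{lem:cont}, so you could not have cited it; you correctly avoid the circularity by giving an independent Perron--Frobenius argument, $\mu_\Sigma(\kp)\ge(\sfS_\Sigma(\kp)\psi_{\kp'},\psi_{\kp'})_\Sigma>(\sfS_\Sigma(\kp')\psi_{\kp'},\psi_{\kp'})_\Sigma=\mu_\Sigma(\kp')$, which only needs the positivity of the top eigenfunction (Proposition~\ref{prop:BS}, whose proof is independent of the lemma). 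Second, the decay $\mu_\Sigma(\kp)\to 0$ as $\kp\to+\infty$ is the one step you assert rather than prove: the Schur bound $\sup_x\int_\Sigma G_\kp(x-y)\,\dd\s(y)\to 0$ does not follow from $C(\Sigma)<\infty$ alone but requires the uniform local estimate $\sup_x\int_{\Sigma\cap B(x,\delta)}|x-y|^{-1}\dd\s(y)\le C\delta$ (Ahlfors upper regularity of a compact Lipschitz surface) combined with the splitting $|x-y|\lessgtr\delta$. This is standard and true, so it is a gap in exposition rather than in substance; the paper sidesteps the issue entirely because its variational proof of (ii) never needs the large-$\kp$ behaviour of $\mu_\Sigma$.
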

\begin{proof}
	The statement of~(i) is a consequence of~\cite[Lem. 3.3]{BEKS94}.

	To show~(ii) it suffices to note that $F_\Sigma(0) = 0$
	and that $\lim_{\aa \arr +\infty} F_\Sigma(\aa) = -\infty$.
	The former is obvious and to show the latter we
	observe that by the min-max principle 
	(see \eg \cite[\S 10.2, Thm. 4]{BS87} or \cite[Thm. 14.2.1]{BEH})
	\begin{equation}\label{eq:ineq}
		F_\Sigma(\aa) \le \fra_{\aa,\Sigma}[\chi]
	\end{equation}	
	for any $\chi \in C^\infty_0(\dR^3)$ such that
	$\|\chi\|_{\dR^3} = 1$. Choose now such $\chi$
	so that	$\|\chi|_{\Sigma}\|_{\Sigma} > 0$
	and pass to the limit $\aa\arr +\infty$ in~\eqref{eq:ineq}.

	To prove~(iii) we pick the normalized
	ground-state eigenfunction $\psi_1$ of 
	$\sfH_{\aa,\Sigma}$ 
	and plug it into the quadratic form $\fra_{\aa',\Sigma}$.
	The inequality $\fra_{\aa',\Sigma}[\psi_1] < \fra_{\aa,\Sigma}[\psi_1]$
	holds and the min-max principle yields the claim.
\end{proof}
In the next proposition we derive a consequence of 
the BS-principle and of the min-max characterization for the principal
eigenvalue of $\sfH_{\aa,\Sigma}$.
\begin{prop}\label{lem:BS1}
	Assume that Hypothesis~\ref{hypothesis} holds.
	Then for $\kp > 0$ the following statements hold.
	\begin{myenum}
		\item $\#(\sd(\sfH_{\aa,\Sigma})\cap (-\infty,-\kp^2) ) \ge 1$ 
		if, and only if, $\mu_\Sigma(\kp) > 1/\aa$.
		\item $E_1(\sfH_{\aa,\Sigma}) = -\kp^2$ if, and only if, 
		$\mu_\Sigma(\kp) = 1/\aa$. 
	\end{myenum}	
\end{prop}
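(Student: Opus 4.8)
The plan is to derive both statements directly from the Birman-Schwinger principle in Theorem~\ref{thm:BS} together with the monotonicity of the eigenvalues of $\sfS_\Sigma(\kp)$ in $\kp$. First I would record the key structural facts about the family $\{\sfS_\Sigma(\kp)\}_{\kp>0}$: each $\sfS_\Sigma(\kp)$ is compact, self-adjoint and non-negative, so its spectrum consists of a sequence of non-negative eigenvalues accumulating only at $0$, and $\mu_\Sigma(\kp)$, the largest of them, is a well-defined positive number that can be written via min-max as
\[
	\mu_\Sigma(\kp) = \max_{0\neq\psi\in L^2(\Sigma)} \frac{(\sfS_\Sigma(\kp)\psi,\psi)_\Sigma}{\|\psi\|_\Sigma^2}.
\]
Since the Green's function $G_\kp$ in~\eqref{freeG3} is strictly decreasing in $\kp$ for each fixed argument, the quadratic form $\psi\mapsto(\sfS_\Sigma(\kp)\psi,\psi)_\Sigma = \int_\Sigma\int_\Sigma G_\kp(x-y)\psi(x)\overline{\psi(y)}\,\dd\s(x)\dd\s(y)$ is strictly decreasing in $\kp$ on the cone of non-negative $\psi$. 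From the explicit exponential decay one also gets that $\kp\mapsto\mu_\Sigma(\kp)$ is continuous, strictly decreasing, and satisfies $\mu_\Sigma(\kp)\to 0$ as $\kp\to+\infty$. These monotonicity and limit properties are what turn the counting statements into threshold statements.

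For part~(i), the idea is to combine Theorem~\ref{thm:BS} with the fact that $I-\aa\sfS_\Sigma(\kp')$ has a nontrivial kernel for some $\kp'>\kp$ exactly when the largest eigenvalue $\mu_\Sigma(\kp')$ reaches $1/\aa$. Concretely, $\#(\sd(\sfH_{\aa,\Sigma})\cap(-\infty,-\kp^2))\ge 1$ means there is an eigenvalue $-\kp'^2<-\kp^2$, i.e.\ some $\kp'>\kp$ with $\dim\ker(\sfH_{\aa,\Sigma}+\kp'^2)\ge 1$; by Theorem~\ref{thm:BS} this is equivalent to $\dim\ker(I-\aa\sfS_\Sigma(\kp'))\ge 1$, which holds iff $1/\aa$ is an eigenvalue of $\sfS_\Sigma(\kp')$, in particular iff $\mu_\Sigma(\kp')\ge 1/\aa$ for some $\kp'>\kp$. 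Using that $\mu_\Sigma$ is continuous and strictly decreasing in $\kp$, the existence of such a $\kp'>\kp$ is equivalent to the strict inequality $\mu_\Sigma(\kp)>1/\aa$ at the given $\kp$ itself: if $\mu_\Sigma(\kp)>1/\aa$ then by continuity $\mu_\Sigma(\kp')=1/\aa$ for some slightly larger $\kp'$, producing an eigenvalue below $-\kp^2$; conversely if $\mu_\Sigma(\kp)\le 1/\aa$ then $\mu_\Sigma(\kp')<1/\aa$ for all $\kp'>\kp$, so $1/\aa$ is never attained there and there is no such eigenvalue.

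For part~(ii), the statement $E_1(\sfH_{\aa,\Sigma})=-\kp^2$ says precisely that $-\kp^2$ is an eigenvalue and that no eigenvalue lies strictly below it. By Theorem~\ref{thm:BS} the first condition is $1/\aa\in\spec(\sfS_\Sigma(\kp))$, and the second, by part~(i) applied at $\kp$, is $\mu_\Sigma(\kp)\le 1/\aa$. Together these force $1/\aa$ to be the \emph{largest} eigenvalue of $\sfS_\Sigma(\kp)$, i.e.\ $\mu_\Sigma(\kp)=1/\aa$; conversely, if $\mu_\Sigma(\kp)=1/\aa$ then $1/\aa$ is an eigenvalue so $-\kp^2$ is in the spectrum, while strict monotonicity gives $\mu_\Sigma(\kp')<1/\aa$ for all $\kp'>\kp$, ruling out any lower eigenvalue by part~(i). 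The step I expect to require the most care is establishing the strict monotonicity and continuity of $\kp\mapsto\mu_\Sigma(\kp)$ rigorously, since one must control the top eigenvalue of a $\kp$-dependent compact operator; the cleanest route is the min-max formula above combined with the pointwise strict monotonicity of $G_\kp$ and a dominated-convergence argument for continuity, and one should note that the largest eigenvalue has a non-negative (indeed positive) eigenfunction, by a Perron-Frobenius-type argument using the positivity of the kernel $G_\kp$, which guarantees that the supremum over all $\psi$ is attained on the positivity cone where the strict decrease in $\kp$ is genuine.
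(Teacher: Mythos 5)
Your argument is correct, but it runs along a different axis than the paper's. The paper keeps $\kp$ fixed and varies the coupling constant: it uses Lemma~\ref{lem:cont} (continuity and strict monotonicity of $\aa\mapsto F_\Sigma(\aa)=\inf\s(\sfH_{\aa,\Sigma})$, which follows cheaply from the min-max principle applied to the forms $\fra_{\aa,\Sigma}$) to produce an auxiliary strength $\aa'\in(0,\aa)$ with $E_1(\sfH_{\aa',\Sigma})=-\kp^2$, and then reads off $\mu_\Sigma(\kp)\ge 1/\aa'>1/\aa$ from Theorem~\ref{thm:BS}. You instead keep $\aa$ fixed and vary $\kp$, relying on continuity and strict monotonicity of $\kp\mapsto\mu_\Sigma(\kp)$ together with $\mu_\Sigma(\kp)\to 0$. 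One caution: in the paper these properties of $\mu_\Sigma$ are Lemma~\ref{lem:cont2}, whose proof \emph{uses} Proposition~\ref{lem:BS1}, so simply citing them would be circular; your proposal avoids this because you derive them directly from the kernel (Schur-test continuity as in Lemma~\ref{lem:conv0}, strict decrease via the pointwise monotonicity of $G_\kp$ tested against a non-negative top eigenfunction in the spirit of Proposition~\ref{prop:BS}), but this derivation must genuinely be carried out rather than quoted. Two small points of precision: the passage from $\mu_\Sigma(\kp)>1/\aa$ to an eigenvalue below $-\kp^2$ is an intermediate-value argument on $[\kp,K]$ using $\mu_\Sigma(K)<1/\aa$ for large $K$, not merely ``some slightly larger $\kp'$''; and in part~(ii) one should note that $\mu_\Sigma(\kp)=\|\sfS_\Sigma(\kp)\|>0$ is attained as an eigenvalue because $\sfS_\Sigma(\kp)$ is compact, self-adjoint and non-negative. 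The trade-off is that the paper's route needs no positivity or Perron--Frobenius input at this stage, while yours front-loads the analysis of $\mu_\Sigma(\kp)$ and obtains Lemma~\ref{lem:cont2} as a byproduct.
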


\begin{proof}
	We split the proofs of both items into showing two implications.

	{\rm (i)}
	To show ``$\Rightarrow $'' we suppose that
	$\#(\sd(\sfH_{\aa,\Sigma})\cap (-\infty,-\kp^2) ) \ge 1$ 
	and thus  $E_1(\sfH_{\aa,\Sigma}) < -\kp^2$.
	Hence, by Lemma~\ref{lem:cont} there exists
	$\aa' \in (0, \aa)$ such that $E_1(\sfH_{\aa',\Sigma}) = -\kp^2$.
	In view of Theorem~\ref{thm:BS}, we have, in particular, 
	$1/\aa'\in \sd(\sfS_{\Sigma}(\kp))$. Thus, we arrive at
	$\mu_\Sigma(\kp) \ge 1/\aa' > 1/\aa$.

	To verify ``$\Leftarrow $'' we  presume that $\mu_\Sigma(\kp) > 1/\aa$
	and set $\aa' := (\mu_\Sigma(\kp))^{-1}$. 
	In particular, we have $\aa' \in (0, \aa)$ and also $-\kp^2\in\sd(\sfH_{\aa',\Sigma})$ by Theorem~\ref{thm:BS}. 
	Lemma~\ref{lem:cont} implies that
	$E_1(\sfH_{\aa,\Sigma}) < E_1(\sfH_{\aa',\Sigma}) \le -\kp^2$
	and that $\#(\sd(\sfH_{\aa,\Sigma})\cap (-\infty,-\kp^2) ) \ge 1$. 

	{\rm (ii)}
	For ``$\Rightarrow $'', we suppose that $E_1(\sfH_{\aa,\Sigma}) = -\kp^2$. 
	Then by Theorem~\ref{thm:BS} we have
	$1/\aa\in\sd(\sfS_\Sigma(\kp))$. Therefore, we get
	$\mu_\Sigma(\kp) \ge 1/\aa$, but the inequality
	$\mu_\Sigma(\kp) > 1/\aa$ leads to a contradiction to item~(i)
	of this proposition, 
	because we would obtain $E_1(\sfH_{\aa,\Sigma}) < -\kp^2$. 
	Therefore, the equality $\mu_\Sigma(\kp) = 1/\aa$ is satisfied.	
	
	For ``$\Leftarrow $'', we presume that $\mu_\Sigma(\kp) = 1/\aa$. 
	Then again by Theorem~\ref{thm:BS} 
	we have	$-\kp^2\in\sd(\sfH_{\aa,\Sigma})$
	and hence $E_1(\sfH_{\aa,\Sigma}) \le -\kp^2$.
	The inequality $E_1(\sfH_{\aa,\Sigma}) < -\kp^2$ leads to a contradiction
	to item~(i), because we would get $\mu_\Sigma(\kp) > 1/\aa$.
	Therefore, the equality $E_1(\sfH_{\aa,\Sigma}) = -\kp^2$ holds.
\end{proof}	
Next, we analyse the dependence of $\mu_\Sigma(\kp)$
on the parameter $\kp$.
\begin{lem}\label{lem:cont2}
	Assume that Hypothesis~\ref{hypothesis} holds.
	Then the function $(0,+\infty) \ni \kp \mapsto \mu_\Sigma(\kp)$
	is continuous and decreasing.
\end{lem}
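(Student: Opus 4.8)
The plan is to handle the two assertions separately, both reducing to the elementary observation that for fixed $r>0$ the scalar map $\kp\mapsto e^{-\kp r}$ is smooth and strictly decreasing. Throughout I would use that $\sfS_\Sigma(\kp)$ is compact, self-adjoint, and non-negative, so that $\mu_\Sigma(\kp)=\|\sfS_\Sigma(\kp)\|$ equals the operator norm and admits the variational characterization $\mu_\Sigma(\kp)=\max_{\|\psi\|_\Sigma=1}\big(\sfS_\Sigma(\kp)\psi,\psi\big)_\Sigma$.

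For continuity I would prove the stronger statement that $(0,+\infty)\ni\kp\mapsto\sfS_\Sigma(\kp)$ is Lipschitz in the operator norm; continuity of $\mu_\Sigma$ then follows at once from $\big|\,\|A\|-\|B\|\,\big|\le\|A-B\|$. To estimate $\|\sfS_\Sigma(\kp_1)-\sfS_\Sigma(\kp_2)\|$ I would dominate it by the Hilbert--Schmidt norm, i.e. by the $L^2(\Sigma\times\Sigma)$-norm of the kernel difference $G_{\kp_1}(x-y)-G_{\kp_2}(x-y)$. By the mean value theorem $|e^{-\kp_1 r}-e^{-\kp_2 r}|\le|\kp_1-\kp_2|\,r\,e^{-\kp_{\min}r}$ with $\kp_{\min}=\min\{\kp_1,\kp_2\}$, so the kernel difference is pointwise bounded by $\tfrac{|\kp_1-\kp_2|}{4\pi}e^{-\kp_{\min}|x-y|}\le\tfrac{|\kp_1-\kp_2|}{4\pi}$. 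Since $\Sigma$ is compact its surface measure $\s(\Sigma)$ is finite, and integrating this bound over $\Sigma\times\Sigma$ yields $\|\sfS_\Sigma(\kp_1)-\sfS_\Sigma(\kp_2)\|\le\tfrac{\s(\Sigma)}{4\pi}\,|\kp_1-\kp_2|$, giving the Lipschitz continuity of $\mu_\Sigma$.

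For monotonicity I would exploit that the kernel $G_\kp$ is everywhere strictly positive, so $\sfS_\Sigma(\kp)$ is positivity improving; hence by the Krein--Rutman (Perron--Frobenius) theorem the largest eigenvalue $\mu_\Sigma(\kp)$ is simple and possesses a strictly positive eigenfunction $\psi_\kp$. Fixing $0<\kp_1<\kp_2$ and taking $\psi_{\kp_2}>0$ normalized as test function, the strict pointwise inequality $G_{\kp_1}(x-y)>G_{\kp_2}(x-y)$ for $x\ne y$ together with $\psi_{\kp_2}>0$ would give
\[
  \mu_\Sigma(\kp_1)\ge\big(\sfS_\Sigma(\kp_1)\psi_{\kp_2},\psi_{\kp_2}\big)_\Sigma
  >\big(\sfS_\Sigma(\kp_2)\psi_{\kp_2},\psi_{\kp_2}\big)_\Sigma=\mu_\Sigma(\kp_2),
\]
so that $\mu_\Sigma$ is strictly decreasing.

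I expect the main obstacle to be the \emph{strictness} of the monotonicity. The pointwise inequality $G_{\kp_1}>G_{\kp_2}$ only delivers $\mu_\Sigma(\kp_1)\ge\mu_\Sigma(\kp_2)$ for a general test function, because the quadratic form $\big((\sfS_\Sigma(\kp_1)-\sfS_\Sigma(\kp_2))\psi,\psi\big)_\Sigma$ is not sign-controlled when $\psi$ changes sign---a positive kernel need not produce a positive operator. This is precisely why I would invoke the positivity-improving property to secure a positive maximizer, which renders the integrand $\big(G_{\kp_1}(x-y)-G_{\kp_2}(x-y)\big)\psi_{\kp_2}(x)\psi_{\kp_2}(y)$ strictly positive for a.e. $(x,y)$ and thereby makes the inequality strict.
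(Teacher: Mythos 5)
Your proof is correct, but it takes a genuinely different route from the paper's. For continuity, the paper simply cites \cite[Lem.\ 3.2]{BEKS94}; your argument is self-contained and in fact stronger, giving a Lipschitz bound $\|\sfS_\Sigma(\kp_1)-\sfS_\Sigma(\kp_2)\|\le \tfrac{\s(\Sigma)}{4\pi}|\kp_1-\kp_2|$ via the mean value theorem cancelling the $|x-y|^{-1}$ singularity and a Hilbert--Schmidt domination on the compact surface $\Sigma$; this is the same flavour of kernel estimate the paper uses later (with Schur's test) only for the limit $\kp\to 0+$ in Lemma~\ref{lem:conv0}. For strict monotonicity, the paper argues indirectly through the Birman--Schwinger correspondence: it sets $\aa_j=(\mu_\Sigma(\kp_j))^{-1}$, reads off $E_1(\sfH_{\aa_j,\Sigma})=-\kp_j^2$ from Proposition~\ref{lem:BS1}\,(ii), and deduces $\aa_1>\aa_2$ from the monotonicity of $\aa\mapsto F_\Sigma(\aa)$ established in Lemma~\ref{lem:cont}; no positivity of eigenfunctions is needed there. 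You instead work entirely at the level of the operator $\sfS_\Sigma(\kp)$, combining the strict pointwise monotonicity of the kernel $G_\kp$ with the a.e.\ positive ground-state eigenfunction --- which is exactly the content of Proposition~\ref{prop:BS} in the paper, proved independently of this lemma, so there is no circularity. Your identification of the key obstacle is accurate: a positive kernel does not make the operator difference non-negative, so the positive maximizer is genuinely needed for strictness. The trade-off is that the paper's route leans on the Schr\"odinger-operator machinery it has already built (and keeps the Perron--Frobenius input confined to where it is used later), while yours is more elementary and purely integral-operator-theoretic, at the cost of importing the positivity of the principal eigenfunction at this earlier stage.
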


\begin{proof}
	{\rm (i)} 
	Continuity of $\mu_\Sigma(\cdot)$ follows from \cite[Lem. 3.2]{BEKS94}
	and its proof.
		
	{\rm (ii)} Let $\kp_1 > \kp_2$ and set $\aa_j := (\mu_\Sigma(\kp_j))^{-1}$, 
	$j=1,2$. Using Lemma~\ref{lem:cont} and Proposition~\ref{lem:BS1} we get
	\[
		F_\Sigma(\aa_1) = E_1(\sfH_{\aa_1,\Sigma}) 
		= 
		-\kp^2_1 < -\kp^2_2 
		= 
		E_1(\sfH_{\aa_2,\Sigma}) 
		= 
		F_\Sigma(\aa_2),
	\]	
	and, hence, $\aa_1 > \aa_2$. Thus,
	$\mu_\Sigma(\kp_1) < \mu_\Sigma(\kp_2)$ and the claim is shown.
\end{proof}
Further, we analyse the behavior of $\sfS_\Sigma(\kp)$ and of $\mu_\Sigma(\kp)$ in the limit
$\kp\arr 0+$.
\begin{lem}\label{lem:conv0}
	Assume that Hypothesis~\ref{hypothesis} is satisfied.
	Then the convergence
	\begin{equation}\label{eq:S_conv}
		\lim_{\kp\arr 0+}\|\sfS_\Sigma(\kp) -\sfS_\Sigma(0)\| = 0
	\end{equation}
        holds, in particular, the operator $\sfS_\Sigma(0)$ is 
	self-adjoint and non-negative, and,
	moreover, $\mu_\Sigma(\kp)\arr \mu_\Sigma(0)$ as $\kp \arr 0+$.
\end{lem}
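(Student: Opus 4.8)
The plan is to prove the operator-norm convergence~\eqref{eq:S_conv} first and then read off the two remaining assertions as immediate consequences. I would argue at the level of integral kernels: the difference $\sfS_\Sigma(\kp) - \sfS_\Sigma(0)$ is the integral operator on $L^2(\Sigma)$ with kernel
\[
G_\kp(x-y) - G_0(x-y) = \frac{e^{-\kp|x-y|} - 1}{4\pi|x-y|}.
\]
The decisive point is that, in contrast to the individual Green's functions, this difference is free of the diagonal singularity: from the elementary inequality $1 - e^{-t} \le t$, valid for all $t \ge 0$, the kernel is bounded by $\kp/(4\pi)$ pointwise and uniformly in $x,y \in \Sigma$.

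Since $\Sigma$ is a \emph{compact} Lipschitz surface, its total surface measure $|\Sigma|$ is finite, so the uniformly bounded kernel is square-integrable on $\Sigma \times \Sigma$. I would therefore control the operator norm by the Hilbert-Schmidt norm,
\[
\|\sfS_\Sigma(\kp) - \sfS_\Sigma(0)\|
\le
\|\sfS_\Sigma(\kp) - \sfS_\Sigma(0)\|_{\mathrm{HS}}
\le
\frac{\kp}{4\pi}\,|\Sigma|,
\]
and let $\kp \arr 0+$; this establishes~\eqref{eq:S_conv}. (A Schur test on the same kernel gives the identical bound, should one prefer to avoid Hilbert-Schmidt norms.)

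The rest follows from soft arguments. The operators $\sfS_\Sigma(\kp)$ with $\kp > 0$ are self-adjoint and non-negative, and both properties survive norm limits: passing to the limit in $\sfS_\Sigma(\kp) = \sfS_\Sigma(\kp)^*$ and in $(\sfS_\Sigma(\kp)\psi, \psi)_\Sigma \ge 0$ shows the same for $\sfS_\Sigma(0)$. Moreover, for a non-negative self-adjoint compact operator the largest eigenvalue equals the operator norm, so $\mu_\Sigma(\kp) = \|\sfS_\Sigma(\kp)\|$ for every $\kp \ge 0$; combining this with the reverse triangle inequality
\[
\bigl|\,\|\sfS_\Sigma(\kp)\| - \|\sfS_\Sigma(0)\|\,\bigr|
\le
\|\sfS_\Sigma(\kp) - \sfS_\Sigma(0)\|
\]
and with~\eqref{eq:S_conv} yields $\mu_\Sigma(\kp) \arr \mu_\Sigma(0)$.

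I do not anticipate a genuine obstacle: the argument rests entirely on the cancellation of the $1/|x-y|$ singularity in the difference of Green's functions and on the finiteness of $|\Sigma|$. The only step deserving a moment's care is the identity $\mu_\Sigma(\cdot) = \|\sfS_\Sigma(\cdot)\|$, which is valid precisely because the operators are compact, self-adjoint, and non-negative, and for $\kp = 0$ the non-negativity it uses is exactly what the norm convergence has just supplied.
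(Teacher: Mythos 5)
Your proof is correct, and it follows the same overall architecture as the paper's (pointwise kernel estimate $\Rightarrow$ operator-norm convergence $\Rightarrow$ soft consequences for self-adjointness, non-negativity, and $\mu_\Sigma(\kp)=\|\sfS_\Sigma(\kp)\|$), but the key estimate is handled differently, and your version is the more elementary one. The paper bounds the kernel difference by $\bigl(1-e^{-\kp D(\Sigma)}\bigr)\cdot\frac{1}{4\pi|x-y|}$ with $D(\Sigma)=\sup_{x,y\in\Sigma}|x-y|$, applies a Schur test, and therefore still has to integrate the singular factor $|x-y|^{-1}$ over $\Sigma$; this forces it to invoke the finiteness of $C(\Sigma)=\sup_{x\in\Sigma}\int_\Sigma\frac{\dd\s(y)}{4\pi|x-y|}$, a non-trivial regularity fact which the authors explicitly flag as subtle and outsource to \cite[Prop. 2]{Go12}. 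You instead use $1-e^{-t}\le t$ to cancel the singularity outright, obtaining the uniform bound $\kp/(4\pi)$ on the kernel difference, after which only $|\Sigma|<\infty$ is needed (via Hilbert--Schmidt or Schur, both giving $\frac{\kp}{4\pi}|\Sigma|$ versus the paper's $C(\Sigma)(1-e^{-\kp D(\Sigma)})$, both $O(\kp)$). For this lemma your route is self-contained and avoids the external regularity input entirely; the paper's route has the mild advantage of reusing quantities ($C(\Sigma)$) tied to the boundedness theory of $\sfS_\Sigma(\kp)$ already present in its references. The concluding soft arguments (norm limits preserve self-adjointness and non-negativity; largest eigenvalue of a compact non-negative self-adjoint operator equals its norm; reverse triangle inequality) coincide with the paper's.
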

\begin{proof}
	Introduce the following parameters characterizing $\Sigma$
	\[
		D(\Sigma) := \sup_{x,y\in\Sigma} |x - y| <\infty,
		\qquad
		C(\Sigma) := \sup_{x\in \Sigma} 
		\int_\Sigma \frac{\dd\s(y)}{4\pi|x-y|}< \infty.
	\]
	While finiteness of $D(\Sigma)$ follows directly from compactness of $\Sigma$,
	finiteness of $C(\Sigma)$ is more subtle 
	and is connected with the regularity of $\Sigma$;
	see \cite[Prop. 2]{Go12} for a proof.
	Using  Schur's test \cite[Lem. 0.32]{Te} and the symmetry
	of the integral kernel of $\sfS_\Sigma(\kp) -\sfS_\Sigma(0)$ we obtain
	\[
	\begin{split}
		\|\sfS_\Sigma(\kp) -\sfS_\Sigma(0)\| 
		&\le 
		\sup_{x\in\Sigma}
		\int_\Sigma \frac{1 - e^{-\kp|x-y|} }{4\pi |x-y|} \dd\s(y)\\
		& \le \Big(1-e^{-\kappa D(\Sigma)}\Big)
		\sup_{x\in\Sigma}
		\int_\Sigma \frac{\dd\s(y)}{4\pi|x-y|}
		\le C(\Sigma) \Big(1-e^{-\kp D(\Sigma)}\Big)
		\arr 0,\quad \kp\arr 0+.
	\end{split}	
	\]
	Self-adjointness and non-negativity of $\sfS_\Sigma(0)$
	are thus consequences of respective properties of $\sfS_\Sigma(\kp)$
	for $\kp > 0$ and of the above convergence. Finally,
	$\mu_\Sigma(\kp)\arr \mu_\Sigma(0)$ is equivalent to
	$\|\sfS_\Sigma(\kp)\|\arr \|\sfS_\Sigma(0)\|$ as $\kp \arr 0+$,
	which also follows from~\eqref{eq:S_conv}.
\end{proof}
The statement of the next proposition extends~\cite[Prop. 6.1]{EF09}
to surfaces of lower smoothness.
%
\begin{prop}\label{prop:existence}
	Assume that Hypothesis~\ref{hypothesis} holds.
	Then $\#\sd(\sfH_{\aa,\Sigma})\ge 1$ if, and only if, $\mu_\Sigma(0) > 1/\aa$.
\end{prop}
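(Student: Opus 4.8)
The plan is to read Proposition~\ref{prop:existence} as the limiting case $\kp\arr 0+$ of Proposition~\ref{lem:BS1}\,(i), using the boundary behavior of $\mu_\Sigma(\kp)$ recorded in Lemmas~\ref{lem:cont2} and~\ref{lem:conv0}. Since $\sess(\sfH_{\aa,\Sigma}) = [0,+\infty)$, every point of $\sd(\sfH_{\aa,\Sigma})$ is strictly negative; hence $\#\sd(\sfH_{\aa,\Sigma})\ge 1$ is equivalent to the existence of a lowest eigenvalue $E_1(\sfH_{\aa,\Sigma}) = \inf\s(\sfH_{\aa,\Sigma}) < 0$ (the set $\sd$ is discrete, bounded below by lower semiboundedness, and lies entirely in $(-\infty,0)$, so its infimum is an isolated eigenvalue once it is nonempty). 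I would prove the two implications separately.

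For ``$\Leftarrow$'' I would assume $\mu_\Sigma(0) > 1/\aa$. By Lemma~\ref{lem:conv0} we have $\mu_\Sigma(\kp)\arr\mu_\Sigma(0)$ as $\kp\arr 0+$, so there exists a $\kp > 0$ small enough that the strict inequality $\mu_\Sigma(\kp) > 1/\aa$ still holds. Proposition~\ref{lem:BS1}\,(i) then yields $\#(\sd(\sfH_{\aa,\Sigma})\cap(-\infty,-\kp^2))\ge 1$, and in particular $\#\sd(\sfH_{\aa,\Sigma})\ge 1$.

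For ``$\Rightarrow$'' I would assume $\#\sd(\sfH_{\aa,\Sigma})\ge 1$. By the observation above the lowest eigenvalue exists and is negative, so $E_1(\sfH_{\aa,\Sigma}) = -\kp_0^2$ for some $\kp_0 > 0$, and Proposition~\ref{lem:BS1}\,(ii) gives $\mu_\Sigma(\kp_0) = 1/\aa$. Since $\kp\mapsto\mu_\Sigma(\kp)$ is strictly decreasing on $(0,+\infty)$ by Lemma~\ref{lem:cont2} and converges to $\mu_\Sigma(0)$ as $\kp\arr 0+$ by Lemma~\ref{lem:conv0}, picking any $\kp_1\in(0,\kp_0)$ gives $\mu_\Sigma(0)\ge\mu_\Sigma(\kp_1) > \mu_\Sigma(\kp_0) = 1/\aa$, whence $\mu_\Sigma(0) > 1/\aa$.

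The genuine technical substance has already been isolated into the preliminary lemmas — in particular the norm convergence $\sfS_\Sigma(\kp)\arr\sfS_\Sigma(0)$ from Lemma~\ref{lem:conv0}, resting on the Schur test and the finiteness of $C(\Sigma)$ — so the only point here that needs a little care is the \emph{strictness} of $\mu_\Sigma(0) > 1/\aa$ in the ``$\Rightarrow$'' direction. This is precisely where strict monotonicity of $\mu_\Sigma$ (and not merely its continuity up to $0$) is needed: a naive passage to the limit in the non-strict bound $\mu_\Sigma(\kp)\ge 1/\aa$ would return only $\mu_\Sigma(0)\ge 1/\aa$, and the strict separation is recovered by evaluating $\mu_\Sigma$ at an interior point $\kp_1 < \kp_0$ before comparing with $\mu_\Sigma(0)$.
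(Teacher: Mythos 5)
Your proposal is correct and follows essentially the same route as the paper: both directions are obtained by combining Proposition~\ref{lem:BS1} with the continuity of $\mu_\Sigma$ at $\kp=0$ from Lemma~\ref{lem:conv0} and its strict monotonicity from Lemma~\ref{lem:cont2}, the latter being exactly what the paper also invokes to get the strict inequality $\mu_\Sigma(0)>\mu_\Sigma(\kp)=1/\aa$ in the ``$\Rightarrow$'' direction. Your extra remark about why a naive limit would only give a non-strict bound is a fair gloss on the paper's terser wording, but the substance is identical.
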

\begin{proof}
To show ``$\Rightarrow$'' we suppose that $\#\sd(\sfH_{\aa,\Sigma})\ge 1$
and let $E_1(\sfH_{\aa,\Sigma}) = -\kp^2 < 0$ be the corresponding lowest eigenvalue
of $\sfH_{\aa,\Sigma}$. By Lemma~\ref{lem:BS1}\,(ii) 
we get $\mu_\Sigma(\kp) = 1/\aa$. Using Lemmas~\ref{lem:cont2} and~\ref{lem:conv0}
we obtain $\mu_\Sigma(0) > \mu_\Sigma(\kp) = 1/\aa$.

To prove ``$\Leftarrow$'' we suppose that $\mu_\Sigma(0) > 1/\aa$.
Then, according to Lemmas~\ref{lem:cont2} and~\ref{lem:conv0},
for all sufficiently small $\kp > 0$ we have $\mu_\Sigma(\kp) > 1/\aa$.
Hence, by Proposition~\ref{lem:BS1}\,(i) we obtain 
$\#\sd(\sfH_{\aa,\Sigma})\ge 1$. 
\end{proof}
A useful addendum to the BS-principle is provided
in the proposition given below. Its proof is based on a rather standard argument, 
which can be found in some textbooks (see \eg~\cite[Thm. 6.40]{H13}). 
We provide this proof for the sake of completeness.
\begin{prop}\label{prop:BS}
	Assume that Hypothesis~\ref{hypothesis} holds.
	Then the largest eigenvalue of $\sfS_{\Sigma}(\kp)$, $\kp \ge 0$,
	is simple and the corresponding eigenfunction $\efR$ can be chosen
	to be positive almost everywhere on $\Sigma$.
\end{prop}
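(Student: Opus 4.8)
The plan is to recognize this as a Perron--Frobenius (Krein--Rutman) type statement and to exploit the fact that the integral kernel $G_\kp(x-y)=\tfrac{e^{-\kp|x-y|}}{4\pi|x-y|}$ of $\sfS_\Sigma(\kp)$ is strictly positive whenever $x\neq y$. Since the diagonal $\{x=y\}$ is a $\s$-null subset of $\Sigma\times\Sigma$, this renders $\sfS_\Sigma(\kp)$ \emph{positivity improving}: whenever $0\le\psi\in L^2(\Sigma)$ is not identically zero, the function $\sfS_\Sigma(\kp)\psi$ is strictly positive $\s$-almost everywhere on $\Sigma$. Recall also from the discussion preceding Hypothesis~\ref{hypothesis} that $\sfS_\Sigma(\kp)$ is compact, self-adjoint and non-negative, so its largest eigenvalue equals its norm, $\mu_\Sigma(\kp)=\|\sfS_\Sigma(\kp)\|$, and is attained by some normalized eigenfunction; moreover, since the kernel is real and symmetric, eigenfunctions may be taken real.

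First I would produce a positive eigenfunction. Let $\psi$ be a normalized eigenfunction for $\mu:=\mu_\Sigma(\kp)$. Using the positivity of the kernel together with the triangle inequality inside the defining double integral, one obtains
\[
	\mu=\ps{\sfS_\Sigma(\kp)\psi}{\psi}
	\le\ps{\sfS_\Sigma(\kp)|\psi|}{|\psi|}
	\le\mu\,\bigl\||\psi|\bigr\|_\Sigma^2=\mu,
\]
so equality holds throughout and $|\psi|$ is itself a maximizer of the Rayleigh quotient, hence an eigenfunction of $\sfS_\Sigma(\kp)$ for $\mu$. Since $|\psi|\ge0$ is nonzero, the positivity-improving property gives $|\psi|=\mu^{-1}\sfS_\Sigma(\kp)|\psi|>0$ $\s$-a.e., and I set $\efR:=|\psi|$.

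For simplicity I would show that \emph{every} eigenfunction for $\mu$ has a fixed sign $\s$-a.e. Given a real eigenfunction $\phi$, the argument above shows that $|\phi|$ is again an eigenfunction, whence its positive and negative parts $\phi^{\pm}=\tfrac12(|\phi|\pm\phi)$ are eigenfunctions as well. If both $\phi^+$ and $\phi^-$ were nonzero, then each, being a nonzero non-negative eigenfunction, would be strictly positive $\s$-a.e.\ by the positivity-improving property; this contradicts the fact that $\phi^+$ and $\phi^-$ have disjoint supports. Hence one of them vanishes and $\phi$ is sign-definite $\s$-a.e. Consequently, two orthonormal real eigenfunctions for $\mu$ could each be normalized to be strictly positive a.e., forcing their inner product to be strictly positive, which contradicts orthogonality. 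Therefore the eigenspace of $\mu$ is one-dimensional, i.e.\ $\mu_\Sigma(\kp)$ is simple.

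The only genuinely delicate point is the positivity-improving property and its measure-theoretic bookkeeping: one must verify that strict positivity of $G_\kp$ off the null diagonal indeed yields $\sfS_\Sigma(\kp)\psi>0$ a.e.\ for $0\le\psi\not\equiv0$, and that the chain of (in)equalities above is legitimate, the requisite integrability being guaranteed by the finiteness of $C(\Sigma)$ established in the proof of Lemma~\ref{lem:conv0}. Everything else is the standard Perron--Frobenius bootstrap, and it applies uniformly for all $\kp\ge0$.
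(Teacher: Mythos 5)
Your proposal is correct and follows essentially the same route as the paper: the inequality $(\sfS_\Sigma(\kp)\psi,\psi)_\Sigma\le(\sfS_\Sigma(\kp)|\psi|,|\psi|)_\Sigma$ forces $|\psi|$ to be an eigenfunction, strict positivity of the kernel off the null diagonal upgrades this to positivity a.e.\ via the eigenvalue equation, and simplicity follows because two orthogonal a.e.-positive functions cannot exist. The only cosmetic difference is that you rule out a genuinely sign-changing eigenfunction by applying the positivity-improving property to $\phi^{\pm}$ separately, whereas the paper derives the identity $(\sfS_\Sigma(\kp)\efR^+,\efR^-)_\Sigma+(\efR^+,\sfS_\Sigma(\kp)\efR^-)_\Sigma=0$ and contradicts it directly; these are interchangeable steps of the same Perron--Frobenius argument.
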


\begin{proof}
	Since $\sfS_\Sigma(\kp)$ maps real functions
	into real functions, we may assume that $\efR$
	is real-valued. We now show that
	\begin{equation}\label{eq:ineq_Helffer}
		( \sfS_\Sigma(\kp) \efR, \efR )_\Sigma
		\le 
		( \sfS_\Sigma(\kp)| \efR |, | \efR |)_\Sigma.
	\end{equation}
	Let us write
	\[
		\efR = \efR^+ - \efR^-
		\qquad\text{and}\qquad
		|\efR| = \efR^+ + \efR^-,
	\]
	where $\efR^+$ and $\efR^-$ are positive and negative parts
	of $\efR$, respectively.
	The inequality~\eqref{eq:ineq_Helffer} is then a consequence of
	\[
		(\sfS_\Sigma(\kp)\efR^+,\efR^-)_\Sigma \ge 0,
	\]
	which is true, thanks to the positivity of the integral 
	kernel in~\eqref{def:SL}. We then obtain
	\[
	\begin{split}
		\mu_\Sigma(\kp) \|\efR\|^2_{\Sigma} 
		& = 
		(\sfS_{\Sigma}(\kp)\efR,\efR)_{\Sigma}
		\le 
		(\sfS_{\Sigma}(\kp)|\efR|,|\efR|)_{\Sigma}\\
	        &\le
		\|\sfS_{\Sigma}(\kp)\| \|\efR\|^2_{\Sigma}
		= 
		\mu_\Sigma(\kp)\|\efR\|^2_{\Sigma}.
	\end{split}
	\]
	This implies
	\[	
		(\sfS_{\Sigma}(\kp) \efR,\efR)_{\Sigma}
		=
		(\sfS_{\Sigma}(\kp) |\efR|,|\efR|)_{\Sigma}.
	\]
	The above equality yields
	\[
		(\sfS_{\Sigma}(\kp)\efR^+,\efR^-)_{\Sigma} 
		+ 
		(\efR^+,\sfS_{\Sigma}(\kp)\efR^-)_{\Sigma} = 0.
	\]
	Since the integral kernel of $\sfS_\Sigma(\kp)$ is 
	pointwise positive on $\Sigma\times\Sigma$ (\cf \eqref{def:SL}),
	we obtain a contradiction unless either	$\efR^+ = 0$ or $\efR^- = 0$.
	We can assume $\efR \ge 0$ for definiteness. 
	Note that 
	\[
		\efR = (\mu_\Sigma(\kp))^{-1}\sfS_\Sigma(\kp)\efR.
	\]
	This yields $\efR > 0$ almost everywhere on $\Sigma$, again
	because of positivity of the integral kernel of $\sfS_\Sigma(\kp)$.		

	Finally, if the largest eigenvalue of 
	$\sfS_{\Sigma}(\kp)$ were not simple,
	then one would be able to find two orthogonal  
	eigenfunctions $\efR$ and $\varphi_\Sigma$ of $\sfS_\Sigma(\kp)$ 
	corresponding to $\mu_\Sigma(\kp)$. Analogously to the above argument, 
	we would obtain that $\varphi_\Sigma$ is also positive almost everywhere on $\Sigma$
	(up to multiplication with $-1$). 
	But it is impossible to have two orthogonal functions in $L^2(\Sigma)$
	that are both positive almost everywhere.
\end{proof}

\subsection{A parametrization of cones and its consequences}
\label{ssec:param}

Let the cone $\Sigma_R = \Sigma_R(\cT)\subset\dR^3$ 
be as in~\eqref{def:S} with $L := |\cT|$. In this subsection
we provide an efficient parametrization of $\Sigma_R$
and derive some consequences of it. First, note that the cross-section $\cT$
of $\Sigma_R$ can be parametrized by its arc-length via the
unit-speed $C^2$-mapping $\tau \colon [0,L] \arr \dS^2$ ($|\dot\tau| \equiv 1$). 
The cone $\Sigma_R$ can be correspondingly parametrized via the mapping
\begin{equation}\label{eq:S_param}
	\s \colon [0,R) \times [0,L] \arr \dR^3, \qquad \s(r,s) := r\tau(s).
\end{equation}
This parametrization defines natural co-ordinates $(r,s)$ on $\Sigma_R$.

Let the space $L^1(\Sigma_R)$ be introduced as usual, 
by means of the surface measure on $\Sigma_R$.
A function $\psi \in L^1(\Sigma_R)$ can be viewed as a function
of the arguments $r$ and $s$ via the parametrization~\eqref{eq:S_param}.
Thus, the Lebesgue surface integral of $\psi$ can be written as 
\begin{equation}\label{eq:S_integral}
\begin{split}
	\int_{\Sigma_R}  \psi(x) \dd \s(x) & 
	= 
	\int_0^R\int_0^L \psi(r,s) |\s_r(r,s) \times \s_s(r,s)| \dd s \dd r\\
	& = 
	\int_0^R\int_0^L\psi(r,s) |\s_r(r,s)|\cdot |\s_s(r,s)|\dd s \dd r 
	=
	\int_0^R\int_0^L\psi(r,s) r \dd s \dd r;
\end{split}	
\end{equation}	
where, firstly, we employed that the vector $\s_s(r,s) = r\dot\tau(s)$ is of length $r > 0$ 
and belongs to the tangent plane $T_{\tau(s)}(\dS^2)$ of $\dS^2$ at the point $\tau(s)$, secondly, we used that the vector $\s_r(r,s) = \tau(s)$ 
is of unit length and for simple geometric reasons is orthogonal to 
$T_{\tau(s)}(\dS^2)$. A direct consequence of~\eqref{eq:S_integral}
is the following isomorphism 
$L^2(\Sigma_R) \simeq L^2((0,R);r\dd r)\otimes L^2(\cT)$.

Further, with the aid of
the identity 
$|\tau(s) - \tau(s')|^2 = 2 - 2\langle \tau(s), \tau(s') \rangle_{\dR^3}$ we can express the square of the distance between $\s(r,s)$
and $\s(r',s')$ through $r$, $r'$, and $|\tau(s) - \tau(s')|$ as  
\begin{equation}\label{eq:geometric}
\begin{split}
	|\s(r,s) - \s(r',s')|^2 
	& =
	|r\tau(s) - r'\tau(s')|^2
	=
	r^2 + (r')^2 - 2rr'\langle\tau(s), \tau(s')\rangle_{\dR^3}\\
	&=
	r^2 + (r')^2 + rr'\big(|\tau(s) - \tau(s')|^2 -2\big)\\
	&=
	(r - r')^2 + rr'|\tau(s) - \tau(s')|^2.
\end{split}
\end{equation}	
In the next proposition we apply the BS-principle
and separation of variables to 
finite circular cones.
\begin{prop}\label{lem:BS2}
	Let $\cC\subset\dS^2$ be a circle and 
	let $\Gamma_R := \Sigma_R(\cC)$, $R\in(0,+\infty)$, 
	be as in~\eqref{def:S}.	Let the operator
	$\sfS_{\Gamma_R}(\kp)$, $\kp \ge 0$, be as in~\eqref{def:SL}.
	Then the eigenfunction corresponding 
	to the largest eigenvalue of $\sfS_{\Gamma_R}(\kp)$
	is rotationally invariant; \ie 
	it depends on the distance from the origin (tip of the cone) only.
\end{prop}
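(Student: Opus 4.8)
The plan is to exploit the rotational symmetry of the circular cone together with the simplicity and positivity of the top eigenfunction furnished by Proposition~\ref{prop:BS}. Since $\cC$ is a circle, its unit-speed parametrization $\tau\colon[0,L]\arr\dS^2$ can be chosen so that the chord length $|\tau(s)-\tau(s')|$ depends only on $s-s'$ modulo $L$; this is exactly the invariance of $\cC$, and hence of $\Gamma_R$, under the rotations of $\dR^3$ about the axis of the cone. Accordingly, for $t\in\RR$ I introduce the shifts $(U_t\psi)(r,s):=\psi(r,s-t)$, with $s$ read modulo $L$. Under the isomorphism $L^2(\Gamma_R)\simeq L^2((0,R);r\dd r)\otimes L^2(\cC)$ coming from~\eqref{eq:S_integral} these form a strongly continuous one-parameter group of unitaries with $U_L=I$. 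Proving that the top eigenfunction is rotationally invariant is then the same as proving that it is fixed by every $U_t$.

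First I would verify that $\sfS_{\Gamma_R}(\kp)$ commutes with each $U_t$. By~\eqref{def:SL} and~\eqref{eq:S_integral}, in the coordinates $(r,s)$ the operator acts as
\[
	(\sfS_{\Gamma_R}(\kp)\psi)(r,s)
	=\int_0^R\int_0^L G_\kp\big(\s(r,s)-\s(r',s')\big)\,\psi(r',s')\,r'\,\dd s'\,\dd r',
\]
and since $G_\kp$ depends only on the length $|\s(r,s)-\s(r',s')|$, identity~\eqref{eq:geometric} shows that this kernel is a function of $r$, $r'$ and $|\tau(s)-\tau(s')|$ alone. Circularity of $\cC$ makes $|\tau(s)-\tau(s')|$ depend only on $s-s'$ modulo $L$, so the kernel is invariant under the simultaneous shift $(s,s')\mapsto(s+t,s'+t)$; a change of the integration variable $s'\mapsto s'+t$ then yields $U_t\sfS_{\Gamma_R}(\kp)=\sfS_{\Gamma_R}(\kp)U_t$ for every $t$. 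This commutation, which is the only place where the circularity of the cross-section is used, is the crux of the argument.

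With the symmetry in hand I would finish by averaging over the group. Let $\psi$ be the eigenfunction for the largest eigenvalue $\mu_{\Gamma_R}(\kp)$, chosen positive almost everywhere by Proposition~\ref{prop:BS}, and set
\[
	\Psi:=\int_0^L U_t\psi\,\dd t,
\]
a Bochner integral that converges because $t\mapsto U_t\psi$ is $L^2$-continuous on the compact interval $[0,L]$. Each $U_t\psi$ is positive almost everywhere, so $\Psi>0$ almost everywhere and in particular $\Psi\neq 0$; moreover $\Psi$ is by construction fixed by every $U_{t_0}$, using $U_L=I$ to absorb the shift of the integration variable. The commutation relation together with $\sfS_{\Gamma_R}(\kp)\psi=\mu_{\Gamma_R}(\kp)\psi$ gives
\[
	\sfS_{\Gamma_R}(\kp)\Psi
	=\int_0^L U_t\,\sfS_{\Gamma_R}(\kp)\psi\,\dd t
	=\mu_{\Gamma_R}(\kp)\,\Psi,
\]
so $\Psi$ is again an eigenfunction for the largest eigenvalue. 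Because that eigenvalue is simple (Proposition~\ref{prop:BS}), $\Psi$ must be a nonzero scalar multiple of $\psi$; as $\Psi$ is rotationally invariant, so is $\psi$, i.e.\ it depends on $r$ only. I expect the single delicate point to be the commutation step of the second paragraph, since it is precisely there that the hypothesis that $\cC$ is a circle is indispensable — for a non-circular loop $|\tau(s)-\tau(s')|$ is no longer a function of $s-s'$ and the argument breaks down.
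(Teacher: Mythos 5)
Your proof is correct, and it reaches the conclusion by a genuinely different (though closely related) route. The paper also starts from Proposition~\ref{prop:BS}, but instead of group averaging it performs the Fourier decomposition $L^2(\Gamma_R)=\bigoplus_{m\in\dZ}\ran P_m$ with $P_m=\fri\otimes\pi_m$, proves by an explicit kernel computation that the cross terms vanish, $P_n\sfS_{\Gamma_R}(\kp)P_m=0$ for $m\ne n$, so that $\sfS_{\Gamma_R}(\kp)=\bigoplus_m \sfS^{[m]}_{\Gamma_R}(\kp)$, and then argues that a positive eigenfunction must lie in $\ran P_0$ because no element of $(\ran P_0)^{\bot}$ can be positive. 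Your commutation relation $U_t\sfS_{\Gamma_R}(\kp)=\sfS_{\Gamma_R}(\kp)U_t$ encodes the same symmetry information as the block-diagonality (the $P_m$ are exactly the spectral projections of the group $\{U_t\}$), but your way of extracting the conclusion is different: you average the positive eigenfunction over the group, use positivity only to guarantee that the average is nonzero, and then invoke \emph{simplicity} of the top eigenvalue to identify $\psi$ with its rotationally invariant average. This buys you a slightly more elementary and more portable argument --- no Fourier basis, no computation of the off-diagonal matrix elements, and it would work verbatim for any compact symmetry group acting on $\Sigma$ and commuting with the kernel. What it costs is that you genuinely need the simplicity statement of Proposition~\ref{prop:BS}, whereas the paper's mode decomposition localizes the eigenfunction in the $m=0$ sector using positivity alone. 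The one step worth writing out a little more carefully is the a.e.\ positivity (or at least nonvanishing) of $\Psi$: the cleanest justification is Fubini, giving $\Psi(r,s)=\int_0^L\psi(r,u)\,\dd u>0$ for a.e.\ $r$, which incidentally exhibits the rotational invariance of $\Psi$ directly.
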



\begin{proof}
	Let $L \in (0,2\pi]$ stand for the length of $\cC$
	and $(\cdot,\cdot)_\cC$ denote the scalar product in $L^2(\cC)$.
	The family of functions
	\[
		\chi_m(s) 
		:= 
		\frac{1}{\sqrt{L}}
		\exp\bigg(\frac{2\pi m\ii s}{L}\bigg), 
		\qquad
		s\in (0,L),~m\in\dZ,
	\]
	constitutes an orthonormal basis in $L^2(\cC)$.
	The corresponding family of orthogonal projections
	\[
		\pi_m := \chi_m(\cdot,\chi_m)_\cC,\qquad m\in\dZ,
	\]
	in $L^2(\cC)$ induces through the isomorphism
	$L^2(\Gamma_R) \simeq L^2((0,R);r\dd r)\otimes L^2(\cC)$
	the family of orthogonal projections
	\[
		P_m := \fri \otimes \pi_m, \qquad m\in\dZ,
	\]
	in the Hilbert space $L^2(\Gamma_R)$ satisfying $P_mP_n = 0$ for $m\ne n$
	and $\sum_{m\in\dZ} P_m = I$; here $\fri$ and $I$ are the identity operators 
	in $L^2((0,R);r\dd r)$ and $L^2(\G_R)$, respectively.
	Thus, the decomposition
	\[
		L^2(\Gamma_R) = \bigoplus_{m\in\dZ} \ran P_m
	\]
	holds.
	Observe that for any $\psi \in L^2(\Gamma_R)$ and $m\in\dZ$ we obtain that
	$(P_m\psi)(r,s) = \varphi_m(r)\chi_m(s)$
	 with some $\varphi_m\in L^2((0,R);r\dd r)$.
	Note also that any $\psi \in \ran P_0$ is rotationally invariant
	and that any $\psi \in (\ran P_0)^\bot$ can not be positive on $\G_R$.

	Let $\tau\colon [0,L]\arr \dS^2$ be the unit-speed mapping parametrising
	$\cC$ and $\s\colon [0,R)\times [0,L]\arr \dR^3$ be the corresponding
	mapping parametrising $\Gamma_R$ as in~\eqref{eq:S_param}.
   	For the next argument it is convenient
	to extend the mapping $\tau$ periodically to the whole real-line.
	This extension defines also the corresponding
	extension of  $\s$ to $[0,R)\times \dR$. As there is 
	no danger of confusion, we employ the same notation for these extensions. 
	For any $s\in [0,L]$ and $t \in [0, L/2]$ we introduce the shorthand notation
	\[
		F_\kp(r,r',t) := 
		\frac{e^{-\kp |\s(r',s + t) - \s(r,s)|}}{4\pi
		 |\s(r',s + t) - \s(r,s )|}.
	\]
	A crucial point is
	that $F_\kp(r,r',t)$ does not depend on $s$ because of rotational
	invariance of $\Gamma_R$.	 
	Hence, for any $\psi \in L^2(\Gamma_R)$ and $m,n\in\dZ$, $m\ne n$, we get
	\begin{equation*}\label{eq:mn}
		(\sfS_{\Gamma_R}(\kp)P_n\psi, P_m\psi)_{\Gamma_R} 
		= 2
		\int_0^R\int_0^R \varphi_n(r) \ov{\varphi_m(r')} rr' \dd r \dd r' 
		\int_0^{L/2} F_\kp(r,r',t)\dd t 
		\int_0^L \chi_n(s)\ov{\chi_m(s+t)} \dd s.		
	\end{equation*}
	Thanks to $\int_0^L \chi_n(s)\ov{\chi_m(s + t)} \dd s =  0$, we end up with
	\begin{equation}\label{eq:mn}
		(\sfS_{\Gamma_R}(\kp)P_n\psi, P_m\psi)_{\Gamma_R} = 0.
	\end{equation}
	Thus, $P_n\sfS_{\Gamma_R}(\kp)P_m = 0$ for all $m,n\in\dZ$, $m\ne n$.
	Further, we define
	\[
		\sfS_{\Gamma_R}^{[m]}(\kp) := P_m\sfS_{\Gamma_R}(\kp)P_m,\qquad m\in\dZ.
	\]
	In view of~\eqref{eq:mn} we arrive at
	\[	
		\sfS_{\Gamma_R}(\kp) = \bigoplus_{m\in\dZ}\sfS_{\Gamma_R}^{[m]}(\kp).
	\]
	By Proposition~\ref{prop:BS}
	the eigenfunction corresponding to the largest
	eigenvalue of $\sfS_{\Gamma_R}(\kp)$ can be chosen to be positive almost
	everywhere on $\Gamma_R$. Thus, this eigenfunction necessarily
	belongs to $\ran P_0$ and the claim follows.
\end{proof}

\subsection{Energy of knots}
\label{ssec:knots}

Given a $C^2$-smooth loop $\cT \subset\dS^2$, 
$L := |\cT| \in (0,2\pi] $,
parametrized via the unit-speed mapping  $\tau \colon [0,L] \arr \dS^2$.
Let $|\tau(s) - \tau(t)|$ be the distance between
$\tau(s)$ and $\tau(t)$ in the ambient space $\dR^3$.
Let $f \in C([0,\infty);\dR)$ and consider the energy functional of the form
\begin{equation}\label{def:Phi}
	\Phi_f[\cT] 
	:= 
	\int_0^L\int_0^L f( |\tau(s) - \tau(t)|^2) \dd s \dd t.
\end{equation}
%

Finding the curve which minimizes this functional is a particular
problem in the theory of knots.  
The literature on knots and their energies is quite extensive;
see~\cite{ACGJ03, EFH07, EHL06, Lu66}, the monograph \cite{OHara03}
and the references therein. For our problem it is proven
that circles are unique minimizers under reasonable assumptions on $f$.
Below we formulate a specialized version of this result.
\begin{prop}\cite[Thm. 2.2]{EHL06}, \cite[Thm. 2]{ACGJ03}
	Let $f \in C([0,\infty);\dR)$ be convex and decreasing. 
	Let the functional $\Phi_f$ be as in~\eqref{def:Phi} with $L \in (0,2\pi]$.	
	Let $\cC\subset\dS^2$ be a circle and $\cT\subset \dS^2$ be a $C^2$-smooth 
	non-circular loop such that $|\cT| = |\cC|= L$. 
	Then the following isoperimetric inequality holds
	\[
		\Phi_f[\cC] < \Phi_f[\cT].
	\]
\label{thm:knot_energy}
\end{prop}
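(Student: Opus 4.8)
The plan is to reduce $\Phi_f$ to a one-parameter family of averages over chords joining points at a fixed arc-length separation, and then to compare these averages with the circle by a Fourier argument. Write $\tau$ for the unit-speed parametrization of $\cT$, extended $L$-periodically, and $\tau_0$ for that of $\cC$. Using the substitution $t=s+u$ and $L$-periodicity,
\[
	\Phi_f[\cT] = \int_0^L\!\!\int_0^L f\big(|\tau(s)-\tau(s+u)|^2\big)\,ds\,du .
\]
For each separation $u$ set $m_{\cT}(u) := \tfrac1L\int_0^L |\tau(s)-\tau(s+u)|^2\,ds$. Since $f$ is convex, Jensen's inequality in $s$ gives $\tfrac1L\int_0^L f(|\tau(s)-\tau(s+u)|^2)\,ds \ge f(m_{\cT}(u))$ for each $u$; since $f$ is decreasing, it then suffices to prove the purely geometric chord inequality $m_{\cT}(u)\le m_{\cC}(u)$ for every $u$, because for the circle the integrand is independent of $s$ and $\Phi_f[\cC] = L\int_0^L f(m_{\cC}(u))\,du$.

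To establish $m_{\cT}(u)\le m_{\cC}(u)$ I would expand $\tau$ in a Fourier series $\tau(s)=\sum_{k\in\dZ} a_k e^{2\pi\ii k s/L}$ with $a_k\in\dC^3$, $a_{-k}=\ov{a_k}$, and set $b_k := |a_k|^2\ge 0$. The two structural constraints carried by $\cT$ become moment identities: integrating the membership $\tau(s)\in\dS^2$ gives $\sum_k b_k = 1$, while the unit-speed condition $|\dot\tau|\equiv 1$ gives $\sum_k (2\pi k/L)^2 b_k = 1$, i.e. $\sum_k k^2 b_k = (L/2\pi)^2$. A short computation with the autocorrelation $\int_0^L\langle\tau(s),\tau(s+u)\rangle\,ds$ then yields
\[
	m_{\cT}(u) = 2\sum_{k\in\dZ} b_k\big(1-\cos(2\pi k u/L)\big) = 4\sum_{k\ge 1} b_k\big(1-\cos(2\pi k u/L)\big).
\]
Now the elementary inequality $1-\cos(k\theta)\le k^2(1-\cos\theta)$ for $k\in\dN$ (equivalent to $|\sin(k\theta/2)|\le k|\sin(\theta/2)|$, proved by induction on $k$), applied term by term together with $\sum_{k\ge1}k^2 b_k = \tfrac12(L/2\pi)^2$, gives
\[
	m_{\cT}(u) \le 4\big(1-\cos(2\pi u/L)\big)\sum_{k\ge1}k^2 b_k = 2(L/2\pi)^2\big(1-\cos(2\pi u/L)\big) = m_{\cC}(u),
\]
the last identity being the direct evaluation of the chord for a circle of circumference $L$, whose Fourier expansion is supported on $k\in\{-1,0,1\}$. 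Combined with the reduction of the previous paragraph this yields $\Phi_f[\cT]\ge \Phi_f[\cC]$.

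The remaining, and most delicate, point is the strictness asserted in the statement. The chain above is an equality only if the term-by-term inequality is saturated for almost every $u$, which forces $b_k=0$ for all $k\ge 2$; thus $\tau$ has Fourier content only at $k\in\{-1,0,1\}$. Feeding this back into the unit-speed and sphere constraints pins down $\tau(s) = a_0 + \tfrac{L}{2\pi}\big(v\cos(2\pi s/L)+w\sin(2\pi s/L)\big)$ with $\{v,w\}$ orthonormal, i.e. $\tau$ parametrizes a genuine circle on $\dS^2$. Hence for non-circular $\cT$ some $b_{k_0}>0$ with $k_0\ge 2$, and then $m_{\cT}(u)<m_{\cC}(u)$ on a set of $u$ of positive measure; since $f$ is strictly decreasing, the monotonicity step is strict there, giving $\Phi_f[\cT]>\Phi_f[\cC]$. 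The main obstacle I anticipate is precisely this equality analysis: verifying that saturation of the convexity and monotonicity inequalities, combined with the two moment constraints, characterizes circles exactly, so that the inequality is strict for every admissible non-circular loop.
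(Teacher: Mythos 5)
The paper does not prove this proposition at all: it is imported verbatim from the cited references, so the ``paper's own proof'' is just the citation of \cite[Thm.~2.2]{EHL06} and \cite[Thm.~2]{ACGJ03}. Your argument is a correct, self-contained reconstruction of essentially the same proof those references give: the substitution $t=s+u$, Jensen's inequality in $s$ using convexity, monotonicity of $f$ to reduce everything to the mean-square chord comparison $m_\cT(u)\le m_\cC(u)$, and then L\"uk\H{o}-type Fourier analysis with the two Parseval identities $\sum_k b_k=1$ and $\sum_k k^2 b_k=(L/2\pi)^2$ together with $1-\cos(k\theta)\le k^2(1-\cos\theta)$. I checked the computation of $m_\cT(u)=4\sum_{k\ge1}b_k(1-\cos(2\pi ku/L))$ and the evaluation $m_\cC(u)=2(L/2\pi)^2(1-\cos(2\pi u/L))$; both are right, and your equality analysis (saturation forces Fourier support in $\{-1,0,1\}$, which together with unit speed forces a round circle) is the correct way to get strictness. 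Two small remarks. First, for $u\in(0,L)$ and $k\ge2$ the inequality $|\sin(k\phi)|\le k|\sin\phi|$ is strict whenever $\sin\phi\ne0$, so in fact $m_\cT(u)<m_\cC(u)$ for \emph{every} $u\in(0,L)$ once some $b_{k_0}>0$ with $k_0\ge2$; you only need positive measure, but it is worth knowing the stronger statement holds. Second, your strictness step uses that $f$ is \emph{strictly} decreasing; with a merely non-increasing convex $f$ (e.g.\ constant) the strict inequality in the statement fails, so ``decreasing'' in the proposition must be read in the strict sense --- which is harmless here, since the function $f$ in~\eqref{eq:f} to which the paper applies the proposition satisfies $f'<0$ and $f''>0$.
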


\section{Proofs of the main results}
\label{sec:proofs}

\subsection{Proof of Theorem~\ref{thm1}}
\label{ssec:proof1}
The claim of~(i) is an easy consequence of Proposition~\ref{prop:existence}
and, in particular, $\aa_{\rm cr} = 1/\mu_{\Gamma_R}(0)$,
where $\mu_{\Gamma_R}(0) > 0$
is the largest eigenvalue of $\sfS_{\Gamma_R}(0)$.

To show~(ii) consider the following auxiliary function $f\in C([0,+\infty);\dR)$ defined by
\begin{equation}\label{eq:f}
	f(x) := \frac{e^{-a\sqrt{bx+c}}}{\sqrt{bx+c}},
\end{equation}
where $a, b, c > 0$ are some parameters. By direct computation of derivatives we get
\[
\begin{split}
	f'(x)  &  
	 = -e^{-a\sqrt{bx+c}}
	  \bigg[\frac{ab}{2(bx+c)} +  \frac{b}{2(bx+c)^{3/2}}\bigg] < 0,   \\
	f''(x) &  =  
	e^{-a\sqrt{bx+c}}\bigg[ \frac{a^2b^2}{4(bx+c)^{3/2}} + 
	\frac{3ab^2}{4(bx+c)^2} + \frac{3b^2}{4(bx+c)^{5/2}}\bigg] > 0.
\end{split}
\]
Hence, the function $f$ satisfies the conditions of Proposition~\ref{thm:knot_energy}.
Further, let $\tau_\cC$ and $\tau_\cT$ be the unit-speed mappings
which parametrize $\cC$ and $\cT$, respectively.
Thus, for the functional $\Phi_f$ defined in~\eqref{def:Phi} 
with $L = |\cC| = |\cT|$ and $f$ as in~\eqref{eq:f}
we have by Proposition~\ref{thm:knot_energy} the following inequality
\begin{equation}\label{eq:Phi_f_ineq}
	\Phi_f[ \cC ] < \Phi_f[ \cT ].
\end{equation}
Next, let $\kp \in [0,+\infty)$ be fixed,
let $\mu_{\Gamma_R}(\kp) > 0$
be the largest eigenvalue of $\sfS_{\Gamma_R}(\kp)$
and $\psi_{\Gamma_R} \in L^2(\Gamma_R)$ be the
corresponding normalized eigenfunction
of $\sfS_{\Gamma_R}(\kp)$.
Let also $\mu_{\Lambda_R}(\kp) > 0$
be the largest eigenvalue of $\sfS_{\Lambda_R}(\kp)$.
By Propositions~\ref{prop:BS} and~\ref{lem:BS2} the function 
$\psi_{\Gamma_R}$ can be chosen to be positive and depending on the distance 
from the tip of the cone $\Gamma_R$ only. 
Let us introduce the following test function 
$\psi_{\Lambda_R} \colon \Lambda_R \arr \dR_+$ by
\[
	\psi_{\Lambda_R}(r,s) := \psi_{\Gamma_R}( r ), \qquad r\in (0,R).
\]	
Using the formula in~\eqref{eq:S_integral}, we verify that 
\[
	\|\psi_{\Lambda_R}\|^2_{\Lambda_R} 
	= 
	\int_0^L \int_0^R |\psi_{\Lambda_R}(r,s)|^2 r \dd r \dd s 
	= 
	\int_0^L \int_0^R |\psi_{\Gamma_R}(r)|^2 r \dd r \dd s 
	=
	\|\psi_{\Gamma_R}\|^2_{\Gamma_R} = 1.
\]	
Let $\s_{\Gamma_R}$ and $\s_{\Lambda_R}$ based on $\tau_\cC$ and $\tau_\cT$, respectively,
parametrize $\Gamma_R$ and $\Lambda_R$ as in~\eqref{eq:S_param}. 
Employing the identity~\eqref{eq:geometric}, we find
\begin{equation}\label{eq:geometric2}
\begin{split}
	|\s_{\Gamma_R}(r,s) - \s_{\Gamma_R}(r',s')|^2 & = 
	(r - r')^2 + rr'|\tau_\cC(s) - \tau_\cC(s')|^2,\\
	|\s_{\Lambda_R}(r,s) - \s_{\Lambda_R}(r',s')|^2 & = 
	(r - r')^2 + rr'|\tau_\cT(s) - \tau_\cT(s')|^2.\\
\end{split}
\end{equation}
Further, define for $\kp \ge 0$ the function
\begin{equation}\label{eq:Kkp}
	K_\kp(r,r') := \frac{rr'}{4\pi}\big(\Phi_f[ \cT ] - \Phi_f[ \cC ]\big),\qquad r,r' \in [0,R],
\end{equation}
where $\Phi_f$ is as in \eqref{def:Phi} and $f$ as in~\eqref{eq:f} with $a(r,r') := \kp$, $b(r,r') := rr'$, and  $c(r,r') := (r-r')^2$. 
Note that the function $K_\kp$ is not well-defined for $r = 0$, $r' = 0$ or $r = r'$, but
these conditions correspond to null subsets of $[0,R] \times [0,R]$ and can be neglected. 
By~\eqref{eq:Phi_f_ineq} we obtain that $K_\kp > 0$ almost everywhere on $[0,R] \times [0,R]$. 
Using~\eqref{def:SL},~\eqref{def:Phi},~\eqref{eq:f},~\eqref{eq:geometric2},~and~\eqref{eq:Kkp}
we get for all $\kp \ge 0$
\begin{equation*}
\begin{split}
	\mu_{\Lambda_R}(\kp) &\ge
	(\sfS_{\Lambda_R}(\kp)\psi_{\Lambda_R}, \psi_{\Lambda_R})_{\Lambda_R}
	=
	(\sfS_{\Gamma_R}(\kp)\psi_{\Gamma_R}, \psi_{\Gamma_R})_{\Gamma_R}
	+
	\int_0^R\int_0^R 
	K_\kp(r,r')\psi_{\Gamma_R}(r)\psi_{\Gamma_R}(r')\dd r \dd r'\\[0.3ex]
	&
	> (\sfS_{\Gamma_R}(\kp)\psi_{\Gamma_R},
	 \psi_{\Gamma_R})_{\Gamma_R} = \mu_{\Gamma_R}(\kp)
	 \|\psi_{\Gamma_R}\|^2_{\Gamma_R}
	 = \mu_{\Gamma_R}(\kp).
\end{split}	 
\end{equation*}
%
      
Now, let $\aa \ge \aa_{\rm cr} = (\mu_{\Gamma_R}(0))^{-1}$. 
Then we get by the above inequality for $\kp = 0$
that $\mu_{\Lambda_R}(0) > \mu_{\Gamma_R}(0) = 1/\aa_{\rm cr} \ge 1/\aa$
and using Proposition~\ref{prop:existence} we obtain $\#\sd(\sfH_{\aa,\Lambda_R}) \ge 1$.

Finally, let $\aa > \aa_{\rm cr}$. Then, by item~(i), $\#\sd(\sfH_{\aa,\G_R}) \ge 1$   
and set $\kp := (-E_1(\sfH_{\aa,\Gamma_R}))^{1/2} > 0$.
By Proposition~\ref{lem:BS1}\,(ii) we have $\mu_{\Gamma_R}(\kp) = 1/\aa$
and using again the above inequality
we end up with $\mu_{\Lambda_R}(\kp) > \mu_{\Gamma_R}(\kp) =  1 / \aa$.
Hence, by Proposition~\ref{lem:BS1}\,(i) we get 
\[
	E_1(\sfH_{\aa,\Lambda_R}) < -\kp^2 = E_1(\sfH_{\aa,\Gamma_R}).
\]

\subsection{Proof of Theorem~\ref{thm2}}
\label{ssec:proof2}
Before proving Theorem~\ref{thm2} itself, we provide two auxiliary
statements. The first statement is on the essential spectrum
in the case of infinite cones. It is proven in the circular case 
in~\cite[Thm. 2.1]{BEL14_JPA} and in a more general setting
in~\cite[Thm. 1.6]{BP15} using a different method.
\begin{prop}\label{prop:ess}
	Let $\Sigma_\infty := \Sigma_\infty(\cT)\subset\dR^3$ be an infinite $C^2$-smooth cone as in~\eqref{def:S} and let the self-adjoint operator
	$\sfH_{\aa,\Sigma_\infty}$ be as in Definition~\ref{dfn:Op}.
	Then $\inf\sess(\sfH_{\aa,\Sigma_\infty}) = -\aa^2/4$ holds.
\end{prop}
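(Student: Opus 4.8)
The plan is to prove the two inequalities $\inf\sess(\sfH_{\aa,\Sigma_\infty}) \le -\aa^2/4$ and $\inf\sess(\sfH_{\aa,\Sigma_\infty}) \ge -\aa^2/4$ separately, organising both around Persson's theorem for lower-semibounded forms,
\[
	\inf\sess(\sfH_{\aa,\Sigma_\infty})
	=
	\sup_{R > 0}\ \inf\Big\{\fra_{\aa,\Sigma_\infty}[u] \colon u \in H^1(\dR^3),\ \supp u \subset \dR^3\setminus \overline{B_R},\ \|u\|_{\dR^3} = 1\Big\},
\]
where $B_R$ is the ball of radius $R$ centred at the tip. The number $-\aa^2/4$ is the bottom of the spectrum of the one-dimensional model $-\partial_t^2 - \aa\delta_0$ on $L^2(\dR)$, with normalised ground state $\phi(t) = \sqrt{\aa/2}\,e^{-\aa|t|/2}$, and the whole argument rests on the sharp trace inequality
\[
	\aa\, |v(0)|^2 \le \int_\dR |v'(t)|^2 \dd t + \frac{\aa^2}{4}\int_\dR |v(t)|^2 \dd t, \qquad v \in H^1(\dR),
\]
with equality precisely for multiples of $\phi$. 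Geometrically, the point is that far from the tip the cone is asymptotically flat: parametrised by $\s(r,s) = r\tau(s)$ its rulings are straight, so the only nonzero principal curvature is $O(1/r)$, and on the region $\{|x| > R\}$ the $\delta$-interaction becomes indistinguishable from one on a plane, whose threshold is exactly $-\aa^2/4$.

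For the lower bound I would estimate the inner infimum in Persson's formula from below. Fix $R$ and let $u \in H^1(\dR^3)$ be supported in $\dR^3\setminus\overline{B_R}$, so that its trace lives on the far part $\{r > R\}$ of the cone. On a tubular neighbourhood of this far part I would introduce Fermi coordinates $(y,t)$, with $y \in \Sigma_\infty$ the foot point and $t$ the signed transverse distance, in which $|\nabla u|^2 \ge |\partial_t u|^2$ and the surface and volume measures agree with the flat model up to a factor $1 + O(1/R)$ governed by the second fundamental form. Applying the one-dimensional trace inequality fibrewise in $t$ and integrating over $y$ gives
\[
	\fra_{\aa,\Sigma_\infty}[u] \ge -\Big(\frac{\aa^2}{4} + \eps(R)\Big)\|u\|_{\dR^3}^2, \qquad \eps(R)\arr 0 \ \text{ as } R\arr\infty.
\]
Since the inner infimum is nondecreasing in $R$, Persson's theorem then yields $\inf\sess(\sfH_{\aa,\Sigma_\infty}) \ge -\aa^2/4$.

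For the upper bound I would exhibit, for each fixed $R$ and each $\delta > 0$, a trial function $u$ supported in $\dR^3\setminus\overline{B_R}$ with $\fra_{\aa,\Sigma_\infty}[u]/\|u\|_{\dR^3}^2 \le -\aa^2/4 + \delta$. Choosing a patch of the cone centred at a point $x_\ast$ with $|x_\ast|$ large and a tangential cut-off $\eta$ supported there whose diameter grows slowly relative to $|x_\ast|$, I would set $u(x) = \phi(t(x))\,\eta(y(x))$ in Fermi coordinates. The transverse factor $\phi$ contributes exactly $-\aa^2/4$ in the flat limit, the tangential kinetic energy of $\eta$ and the curvature corrections contribute $o(1)$ as $|x_\ast|\arr\infty$, so the Rayleigh quotient tends to $-\aa^2/4$. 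As this is achievable for every $R$, the inner infimum in Persson's formula is $\le -\aa^2/4$ for all $R$, whence $\inf\sess(\sfH_{\aa,\Sigma_\infty}) \le -\aa^2/4$.

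The main obstacle is the far-field Fermi-coordinate analysis used in both directions: one must verify that the tubular coordinates are nondegenerate on $\{|x| > R\}$ — the admissible tube radius is controlled by the normal injectivity radius of $\Sigma_\infty$, which grows linearly in $r$ precisely because the cone is self-similar with curvature $O(1/r)$ — and then quantify the resulting corrections to the induced metric, to the Jacobian, and to the splitting $|\nabla u|^2 = |\partial_t u|^2 + (\text{tangential})$ sharply enough to absorb them into the error $\eps(R)$. As an alternative, since the statement is already available, one may simply invoke~\cite[Thm. 2.1]{BEL14_JPA} in the circular case and~\cite[Thm. 1.6]{BP15} in general.
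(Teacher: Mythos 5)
Your proposal is a genuine proof sketch, whereas the paper offers none: Proposition~\ref{prop:ess} is stated with a pointer to \cite[Thm.~2.1]{BEL14_JPA} (circular case) and \cite[Thm.~1.6]{BP15} (general corner domains), so the honest ``paper's proof'' is your final sentence. Your route --- Persson's characterisation of $\inf\sess$, the exact one-dimensional $\delta$-model with threshold $-\aa^2/4$ and the sharp trace inequality, and the self-similarity of the cone making it asymptotically flat with curvature $O(1/r)$ --- is the standard and correct way to prove this from scratch, and is close in spirit to the bracketing arguments of \cite{BEL14_JPA}; what it buys over citation is a proof that works uniformly for non-circular $C^2$ cross-sections without invoking the heavier multiscale machinery of \cite{BP15}.

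One quantitative point in the Fermi-coordinate step deserves to be made explicit, because the naive reading of your sketch would fail. You note that the normal injectivity radius at distance $r$ from the tip grows like $cr$, but if you actually take the tube half-width proportional to $r$, the Jacobian factor $1-t\kp(y)$ and the metric corrections are of size $O(t/r)=O(1)$, \emph{not} $o(1)$ as $R\arr\infty$; the errors would then not be absorbable into $\eps(R)$. The fix is to choose the half-width $a(r)$ with $a(r)\arr\infty$ but $a(r)=o(r)$ (e.g.\ $a(r)=\sqrt{r}$): then the fibres still become infinitely long, so the finite-interval trace constant
\[
	\aa\,|v(0)|^2 \le \int_{-a}^{a}|v'|^2\,\dd t + \Big(\tfrac{\aa^2}{4}+\eps(a)\Big)\int_{-a}^{a}|v|^2\,\dd t
\]
tends to $\aa^2/4$, while $t\kp=O(a(r)/r)\arr 0$ uniformly on the support, and the contribution of $u$ outside the (thinner) tube is simply bounded below by $\|\nabla u\|^2\ge 0$ since the trace term lives on $\Sigma_\infty$ only. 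With that choice both your lower and upper bounds go through; the same sublinear width should be used for the cut-off patch in the Weyl-sequence construction, together with a truncation of $\phi$ in $t$ whose cost is exponentially small in $a$.
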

\begin{remark}
	Note that the lowest point of the essential spectrum 
	not necessarily equals to $-\aa^2/4$
	if the cross section of the underlying
	cone is not $C^2$-smooth \cite{BP15}. 
\end{remark}
Next, we formulate and prove a convergence lemma.
\begin{lem}\label{lem:conv}
	Let $\Sigma_R = \Sigma_R(\cT)\subset\dR^3$, $R\in(0,+\infty]$, be 
	$C^2$-smooth cones as in~\eqref{def:S}. 
	Let the self-adjoint operators
	$\sfH_{\aa,\Sigma_R}$ be as in Definition~\ref{dfn:Op}.
	Then $\sfH_{\aa,\Sigma_R}$ converge in the strong resolvent sense to 
	$\sfH_{\aa,\Sigma_\infty}$ as $R \arr +\infty$.
\end{lem}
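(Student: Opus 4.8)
The plan is to establish strong resolvent convergence by exploiting the monotone structure of the quadratic forms $\fra_{\aa,\Sigma_R}$ as $R$ increases. The key observation is that all the forms share the same domain $H^1(\dR^3)$, and as $R$ grows the surface $\Sigma_R = \Sigma_R(\cT)$ expands, so the trace term $\|u|_{\Sigma_R}\|_{\Sigma_R}^2$ grows monotonically. Concretely, for $R_1 < R_2$ one has $\Sigma_{R_1}(\cT) \subset \Sigma_{R_2}(\cT)$, and using the parametrization \eqref{eq:S_integral} the trace norm satisfies
\[
	\|u|_{\Sigma_{R}}\|_{\Sigma_{R}}^2
	=
	\int_0^{R}\int_0^L |u(r\tau(s))|^2\, r\, \dd s\, \dd r,
\]
which is nondecreasing in $R$ and converges as $R\to\infty$ to $\|u|_{\Sigma_\infty}\|_{\Sigma_\infty}^2$ by monotone convergence, at least for $u$ smooth enough that the trace is pointwise controlled. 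Consequently the forms $\fra_{\aa,\Sigma_R}[u]$ are \emph{nonincreasing} in $R$ and converge pointwise to $\fra_{\aa,\Sigma_\infty}[u]$ for each fixed $u \in C_0^\infty(\dR^3)$.

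First I would verify that $C_0^\infty(\dR^3)$ is a form core common to all the operators, which is standard since $C_0^\infty$ is dense in $H^1(\dR^3)$ and the trace map is continuous from $H^1$ into $L^2$ of the relevant surface. Next I would invoke the abstract theory of monotone convergence of quadratic forms: for a nonincreasing sequence of closed, uniformly lower-semibounded forms converging pointwise on a common core, the associated operators converge in the strong resolvent sense to the operator associated with the limiting (closure of the) form. The appropriate reference here is the monotone convergence theorem for forms, as in \cite[Thm. VIII.3.11]{Kato} or the Simon-type results; the sign of monotonicity matters (decreasing forms give strong resolvent convergence via the first representation theorem), and I would apply the version for nonincreasing forms directly.

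The main technical step — and the one I expect to be the chief obstacle — is confirming that the \emph{limiting form is exactly} $\fra_{\aa,\Sigma_\infty}$, rather than merely its closure or some extension, and that the pointwise limit genuinely captures the infinite cone's trace term. The subtlety is that for an infinite cone the trace $u|_{\Sigma_\infty}$ need not lie in $L^2(\Sigma_\infty)$ for all $u \in H^1(\dR^3)$; however, for $u \in C_0^\infty(\dR^3)$ the support is compact, so $u$ vanishes on $\Sigma_\infty$ outside a bounded region, and the integral over $\Sigma_\infty$ reduces to an integral over $\Sigma_R$ for all sufficiently large $R$. This compact-support argument makes the pointwise convergence $\fra_{\aa,\Sigma_R}[u] \to \fra_{\aa,\Sigma_\infty}[u]$ not just a limit but an eventual equality on the core, which is the cleanest route past the integrability concern.

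To finish, I would combine the monotone convergence theorem with the identification of the limit form on the core to conclude strong resolvent convergence of $\sfH_{\aa,\Sigma_R}$ to $\sfH_{\aa,\Sigma_\infty}$ as $R\to\infty$. One should double-check uniform lower-semiboundedness: since $\fra_{\aa,\Sigma_R}[u] \ge \fra_{\aa,\Sigma_\infty}[u]$ by monotonicity and all these forms are bounded below (the $\delta$-interaction on a Lipschitz surface yields a lower-semibounded form by \cite{BEKS94}), a uniform bound is available. With these pieces in place the conclusion follows directly from the standard form-convergence machinery, and no delicate resolvent estimates are needed beyond the monotonicity and the eventual stabilization of the trace integral on compactly supported test functions.
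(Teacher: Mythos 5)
Your overall strategy is exactly the paper's: the forms $\fra_{\aa,\Sigma_R}$ all live on $H^1(\dR^3)$, are nonincreasing in $R$, converge pointwise to $\fra_{\aa,\Sigma_\infty}$, and one invokes the monotone (decreasing) convergence theorem for forms (the paper cites \cite[Thm. S.16]{RS80-1}; Kato's Thm.~VIII.3.11 is the same circle of ideas). However, there is a genuine gap in the way you set it up. For a \emph{nonincreasing} sequence of closed forms with common domain, the limit operator is the one associated with (the closure of the regular part of) the limit form $b[u]:=\lim_R \fra_{\aa,\Sigma_R}[u]$ evaluated on the \emph{entire} common domain $H^1(\dR^3)$ --- this limit exists for every $u\in H^1(\dR^3)$ by monotonicity and uniform lower-semiboundedness, whether you compute it or not. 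Verifying $b=\fra_{\aa,\Sigma_\infty}$ only on the core $C_0^\infty(\dR^3)$ does not identify $b$: a priori one could have $b[u]>\fra_{\aa,\Sigma_\infty}[u]$ for some $u\in H^1(\dR^3)\setminus C_0^\infty(\dR^3)$, and then the strong resolvent limit supplied by the theorem would be associated with $b$ (or its regular part), not with $\fra_{\aa,\Sigma_\infty}$. Unlike the increasing case, the decreasing-form theorem is not stated in terms of convergence on a form core, so your ``eventual equality on $C_0^\infty$'' argument stops short of the conclusion.

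The irony is that the obstacle you restrict to $C_0^\infty(\dR^3)$ to avoid is not actually there. By the standing setup of the paper (\cite[Sec.~2]{BEKS94}, \cite[Prop.~3.1]{BEL14_RMP}, which cover unbounded Lipschitz surfaces such as $\Sigma_\infty$), the form $\fra_{\aa,\Sigma_\infty}$ is closed and lower-semibounded on all of $H^1(\dR^3)$; in particular $u|_{\Sigma_\infty}\in L^2(\Sigma_\infty)$ for every $u\in H^1(\dR^3)$. Hence, using \eqref{eq:S_integral}, $\|u|_{\Sigma_R}\|^2_{\Sigma_R}\nearrow \|u|_{\Sigma_\infty}\|^2_{\Sigma_\infty}<\infty$ by monotone (or dominated) convergence for \emph{every} $u\in H^1(\dR^3)$, which gives $b=\fra_{\aa,\Sigma_\infty}$ on the full common domain; this is precisely the paper's one-line argument. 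Since $\fra_{\aa,\Sigma_\infty}$ is closed, the closability/regular-part caveat in the decreasing-form theorem is moot, and the conclusion follows. So your proof becomes correct once you run the pointwise-convergence step on all of $H^1(\dR^3)$ rather than on $C_0^\infty(\dR^3)$.
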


\begin{proof}
	The sequence of quadratic forms $\fra_{\aa,\Sigma_R}$ as in~\eqref{def:frm}
	is monotonously decreasing in $R$ in the sense of ordering of forms. 
	For any $u \in H^1(\dR^3)$ we obtain by Lebesgue
	dominated convergence theorem that
	\[
		\fra_{\aa,\Sigma_R}[u] \arr 
		\fra_{\aa,\Sigma_\infty}[u],\qquad R\arr +\infty.
	\]
	The claim then follows from~\cite[Thm. S.16]{RS80-1}.
\end{proof}

\begin{proof}[Proof of Theorem~\ref{thm2}]
Let $\Gamma_R := \Sigma_R(\cC)$ and $\Lambda_R := \Sigma_R(\cT)$ 
with $R\in (0, +\infty)$ be $C^2$-smooth cones with the cross-sections $\cC$ and $\cT$, respectively. 
By Lemma~\ref{lem:conv} the operators $\sfH_{\aa,\Gamma_R}$ and 
$\sfH_{\aa,\Lambda_R}$ converge in the strong resolvent sense to 
the operators $\sfH_{\aa,\Gamma_\infty}$ and $\sfH_{\aa,\Lambda_\infty}$,
accordingly. Moreover, for any $R\in (0,\infty)$ 
the form orderings 
\[
	\fra_{\aa,\Gamma_\infty}\prec \fra_{\aa,\Gamma_R}
	\qquad\text{and}\qquad
	\fra_{\aa,\Lambda_\infty}\prec \fra_{\aa,\Lambda_R}
\]
can be directly verified. 
By Proposition~\ref{prop:ess} we also know that 
\begin{equation}\label{eq:essspec}
	\inf\sess(\sfH_{\aa,\Gamma_\infty}) = \inf\sess(\sfH_{\aa,\Lambda_\infty})
	=-\aa^2/4.
\end{equation}
Hence, using~\cite[Satz 9.26\,(b)]{W13} we obtain that
$\#\sd(\sfH_{\aa,\G_R}), \#\sd(\sfH_{\aa,\Lambda_R}) \ge 1$ for
all $R > 0$ large enough and that
\begin{equation}\label{eq:ineq_lim}
\begin{split}
	E_1(\sfH_{\aa,\Gamma_R})
	\arr \inf\s(\sfH_{\aa,\Gamma_\infty}),&\qquad R\arr +\infty,\\
	E_1(\sfH_{\aa,\Lambda_R}) \arr \inf\s(\sfH_{\aa,\Lambda_\infty}),&
	\qquad R\arr +\infty.
\end{split}
\end{equation}
By Theorem~\ref{thm1}\,(ii) we have the inequality
\begin{equation}\label{eq:ineq_R}
	E_1(\sfH_{\aa,\Lambda_R}) \le E_1(\sfH_{\aa,\Gamma_R})
\end{equation}
for all $R > 0$ so large that $\#\sd(\sfH_{\aa,\Gamma_R}) \ge 1$.		
Passing to the limit $ R \arr +\infty $ in the above inequality, 
and using~\eqref{eq:ineq_lim} we end up with
\begin{equation}\label{eq:ineq2}
	\inf\s(\sfH_{\aa,\Lambda_\infty}) 
	\le 
	\inf\s(\sfH_{\aa,\Gamma_\infty}).	
\end{equation}
%
Moreover, by~\cite[Thm. 3.2]{BEL14_JPA} holds $\sd(\sfH_{\aa,\Gamma_\infty}) \ne \varnothing$
and the point $\inf\s(\sfH_{\aa,\Gamma_\infty})$ is, in fact,
the lowest eigenvalue $E_1(\sfH_{\aa,\Gamma_\infty}) <  -\aa^2/4$ of 
$\sfH_{\aa,\Gamma_\infty}$. Thus, by~\eqref{eq:essspec} and~\eqref{eq:ineq2} 
the point $\inf\s(\sfH_{\aa,\Lambda_\infty})$ is,
respectively, the lowest eigenvalue $E_1(\sfH_{\aa,\Lambda_\infty}) <  -\aa^2/4$ of $\sfH_{\aa,\Lambda_\infty}$. 
Concluding,  the operator $\sfH_{\aa,\Lambda_\infty}$ 
has at least one bound state and the inequality~\eqref{eq:ineq2}
can be rewritten as stated in the theorem
\[
	E_1(\sfH_{\aa,\Lambda_\infty}) \le E_1(\sfH_{\aa,\Gamma_\infty}).	\qedhere	
\]
\end{proof}

\section*{Acknowledgments}
This research was supported by the Czech Science Foundation
(GA\v{C}R) within the project 14-06818S. 
We are grateful to the anonymous referee, whose suggestion inspired Remark~\ref{rem:strong}. 
\bibliographystyle{abbrv}


\begin{thebibliography}{50}

\bibitem{ACGJ03}
A.~Abrams, J.~Cantarella, J.~H.~Fu, M.~Ghomi, and R.~Howard,
Circles minimize most knot energies,
{\em Topology} \textbf{42} (2003), 381--394.


\bibitem{AMV15}
N.~Arrizabalaga, A.~Mas, and L.~Vega,
An isoperimetric-type inequality for electrostatic shell interactions
for Dirac operators, \textit{Comm. Math. Phys.} \textbf{344} (2016), 483--505.


\bibitem{BEL14_RMP}
J.~Behrndt, P.~Exner, and V.~Lotoreichik,
Schr\"odinger operators with $\delta$- and $\delta'$-interactions on
Lipschitz surfaces and chromatic numbers of associated partitions,
\emph{Rev.\ Math.\ Phys.} \textbf{26} (2014), 1450015, 43 pp.

\bibitem{BEL14_JPA}
J.~Behrndt, P.~Exner, and V.~Lotoreichik,
Schr\"odinger operators with $\delta$-interactions supported on
conical surfaces,
\emph{J.\ Phys.\ A, Math. Theor.} \textbf{47} (2014), 355202, 16 pp.


\bibitem{BFKLR}
J.~Behrndt, R.\,L.~Frank, C.~K\"uhn, V.~Lotoreichik, and J.~Rohleder,
\emph{to appear in Ann. Henri Poincar\'{e}}, \emph{arXiv:1601.06433}.


\bibitem{BLL13_AHP}
J.~Behrndt, M.~Langer, and V.~Lotoreichik,
Schr\"odinger operators with $\delta$ and $\delta'$-potentials
supported on hypersurfaces,
\emph{Ann.\ Henri Poincar\'{e}} \textbf{14} (2013), 385--423.

\bibitem{BS87} M.\,Sh.~Birman and M.\,Z.~Solomjak,
\textit{Spectral Theory of Self-Adjoint Operators in Hilbert Spaces},
D.\ Reidel Publishing Co., Dordrecht, 1987.


\bibitem{BEH}
J.~Blank, P.~Exner, and M.~Havl\'{i}\v{c}ek, 
\emph{Hilbert space operators in quantum physics},
Theoretical and Mathematical Physics, Springer, Berlin, 2008.

\bibitem{B95}
J.~F.~Brasche,
On the spectral properties of singular perturbed operators, in:
{\em Stochastic Processes and Dirichlet forms}, de Gruyter (1995), 
65--72.

\bibitem{BEKS94}
J.~F.~Brasche, P.~Exner, Y.~A. Kuperin, and P.~{\v{S}}eba,
Schr\"odinger operators with singular interactions,
{\em J.\ Math.\ Anal.\ Appl.} \textbf{184} (1994), 112--139.

\bibitem{BP15}
V.~Bruneau and N.~Popoff,
On the negative spectrum of the Robin Laplacian in corner domains,
{\em Anal. PDE} \textbf{9} (2016), 1259--1283.

\bibitem{Da06}
D.~Daners,
A Faber-Krahn inequality for Robin problems in any space dimension,
\emph{Math. Ann.} \textbf{335} (2006), 767--785.


\bibitem{DEK01}
P.~Duclos, P.~Exner, and D.~Krej\v{c}i\v{r}\'{i}k, 
Bound states in curved quantum layers,
{\em Commun. Math. Phys.} \textbf{223} (2001), 13--28.

\bibitem{Ex05}
P.~Exner,
An isoperimetric problem for point interactions,
\emph{J.\ Phys.\ A, Math. Gen.} \textbf{38} (2005), 4795--4802.

\bibitem{Ex06}
P.~Exner,
Necklaces with interacting beads: isoperimetric problems,
{\em Contemporary Mathematics} \textbf{412} (2006), 141--149.

\bibitem{E08} P.~Exner,
Leaky quantum graphs: a review,
in: \textit{Analysis on graphs and its applications}.
Selected papers based on the Isaac Newton Institute for
Mathematical Sciences programme, Cambridge, UK, 2007.
\textit{Proc.\ Symp.\ Pure Math.} \textbf{77} (2008), 523--564.

\bibitem{EF09}
P.~Exner and M.~Fraas,
On geometric perturbations of critical Schr\"odinger operators with
a surface interaction,
\emph{J.\ Math.\ Phys.} \textbf{50} (2009), 112101, 12 pp.

\bibitem{EFH07}
P.~Exner, M.~Fraas, and E.~M. Harrell,
On the critical exponent in an isoperimetric inequality for chords,
{\em Phys. Lett., A}, \textbf{368} (2007), 1--6.

\bibitem{EHL06}
P.~Exner, E.~M. Harrell, and M.~Loss,
Inequalities for means of chords, with application to isoperimetric
problems, {\em Lett.\ Math.\ Phys.} \textbf{75} (2006), 225--233.

\bibitem{EK15}
P.~Exner and H.~Kova{\v{r}}{\'{\i}}k, {\em Quantum waveguides},
Theoretical and Mathematical Physics, Springer, Cham, 2015.


\bibitem{ER15}
P.~Exner and J.~Rohleder,
Generalized interactions supported on hypersurfaces,
{\em J. Math. Phys.} \textbf{57} (2016), 041507, 23~pp.

\bibitem{Fa23}
G.~Faber,
Beweis, dass unter allen homogenen Membranen von gleicher
Fl{\"a}che und gleicher Spannung die kreisf{\"o}rmige den tiefsten Grundton
gibt, \emph{Verlagd. Bayer. Akad. d. Wiss.}, 1923.

\bibitem{FK15}
P.~Freitas and D.~Krej{\v{c}}i{\v{r}}{\'\i}k,
The first Robin eigenvalue with negative boundary parameter,
\emph{Adv. Math.} \textbf{280} (2015), 322--339.


\bibitem{Go12}
M.~Goldberg,
Dispersive estimates for Schr\"odinger operators with measure-valued
potentials in $\dR^3$,
\emph{Indiana\ Univ.\ Math.\ J.} \textbf{61} (2012), 2123--2141.

\bibitem{H13}
B.~Helffer,
\emph{Spectral theory and its applications}, Cambridge University Press, 2013.

\bibitem{Kato}
T.~Kato, \emph{Perturbation theory for linear operators},
Classics in Mathematics, Springer-Verlag, Berlin, 1995.

\bibitem{Kr25}
E.~Krahn,
\"{U}ber eine von Rayleigh formulierte Minimaleigenschaft des
Kreises, {\em Math. Ann.} \textbf{94} (1925), 97--100.

\bibitem{LP08}
M.~Levitin and L.~Parnovski,
On the principal eigenvalue of a Robin problem with a large
parameter,
{\em Math.\ Nachr.} \textbf{281} (2008), 272--281.


\bibitem{LO15}
V.~Lotoreichik and T.~Ourmi{\`e}res-Bonafos,
On the bound states of Schr\"{o}dinger operators with
$\delta$-interactions on conical surfaces,
{\em Comm. Partial Differential Equations} \textbf{41} (2016), 999--1028.

\bibitem{LR15}
V.~Lotoreichik and J.~Rohleder,
An eigenvalue inequality for Schr\"odinger operators with 
$\delta$- and $\delta^\prime$-interactions supported on hypersurfaces,
{\em Oper.\ Theory\ Adv.\ Appl.} \textbf{247} (2015), 173--184.

\bibitem{Lu66}
G.~L\"{u}k\H{o},
On the mean length of the chords of a closed curve,
{\em Isr.\ J.\ Math.} \textbf{4} (1966), 23--32.


\bibitem{OHara03}
J.~O'Hara,
{\em Energy of knots and conformal geometry}, World Scientific, 2003.


\bibitem{P15}
K.~Pankrashkin,
On the discrete spectrum of Robin Laplacians in conical domains,
{\em Math. Model. Nat. Phenom.} \textbf{11} (2016), 100--110.

\bibitem{RS80-1}
M.~Reed and B.~Simon,
{\em Methods of modern mathematical physics. I: Functional analysis. Rev. and enl. ed.},
Academic Press, New York, 1980.

\bibitem{RS78}
M.~Reed and B.~Simon,
\emph{Methods of modern mathematical physics. {IV}. Analysis of
  operators.}, Academic Press, New York, 1978.


\bibitem{Te}
G.~Teschl,
{\em Mathematical methods in quantum mechanics. With applications to Schr{\"o}dinger operators.}, Graduate Studies in Mathematics,
American Mathematical Society, Providence, RI, 2014.

\bibitem{W13}
J.~Weidmann,
{\em Lineare Operatoren in Hilbertr{\"a}umen: Teil 1 Grundlagen},
Springer-Verlag, 2013.

\end{thebibliography}

\end{document}